\numberwithin{equation}{section}
\newcommand{\stirling}[2]{\genfrac{[}{]}{0pt}{}{#1}{#2}}
\newcommand{\stirlingb}[2]{B\hspace*{-0.2mm}\big[{#1},{#2}\big]}
\newcommand{\stirlingd}[2]{D\hspace*{-0.2mm}\big[{#1},{#2}\big]}
\newcommand{\N}{\mathbb{N}}
\newcommand{\R}{\mathbb{R}}
\newcommand{\C}{\mathbb{C}}
\newcommand{\F}{\mathcal{F}}
\newcommand{\PP}{\mathbb{P}}
\newcommand{\eps}{\varepsilon}
\newcommand*\xbar[1]{%
   \hbox{%
     \vbox{%
       \hrule height 0.5pt 
       \kern0.25ex
       \hbox{%
         \kern-0.05em
         \ensuremath{#1}%
         \kern-0.1em
       }%
     }%
   }%
}
\DeclareMathOperator{\Bern}{Bern}
\DeclareMathOperator{\lin}{lin}
\DeclareMathOperator{\pos}{pos}
\DeclareMathOperator{\relint}{relint}
\renewcommand{\P}{\mathbb{P}}
\theoremstyle{plain}
\newtheorem{satz}{Theorem}[section]
\newtheorem{lem}[satz]{Lemma}
\newtheorem{prop}[satz]{Proposition}
\theoremstyle{definition}
\theoremstyle{remark}
\begin{document}

\author{Thomas Godland$^*$}
\address{Institut f\"ur Mathematische Stochastik,
Westf\"alische Wilhelms-Universit\"at M\"unster,
Orl\'eans-Ring 10,
48149 M\"unster, Germany}
\email[$^*$Corresponding author]{t\_godl01@uni-muenster.de}

\author{Zakhar Kabluchko}
\address{Institut f\"ur Mathematische Stochastik,
Westf\"alische Wilhelms-Universit\"at M\"unster,
Orl\'eans-Ring 10,
48149 M\"unster, Germany}
\email{zakhar.kabluchko@uni-muenster.de}

\title[conic intrinsic volumes of Weyl chambers]{conic intrinsic volumes of Weyl chambers}

\keywords{Stochastic geometry, polyhedral cones, Weyl chambers, conic intrinsic volumes, external and internal angles, Stirling numbers, random walks and bridges}

\subjclass[2010]{Primary: 52A22, 60D05.  Secondary: 52A55, 11B73, 51F15, 52A39}

\begin{abstract}
We give a new, direct  proof of the formulas for the conic intrinsic volumes of the Weyl chambers of types $A_{n-1}, B_n$ and $D_n$. These formulas express the conic intrinsic volumes in terms of the Stirling numbers of the first kind and their $B$- and $D$-analogues.  The proof involves an explicit determination of the internal and external angles of the faces of the Weyl chambers.
\end{abstract}

\maketitle

\section{Introduction}
A polyhedral cone in the Euclidean space $\R^n$ is a set of solutions to a finite system of linear homogeneous  inequalities. That is, a polyhedral cone $C\subseteq \R^n$ can be represented as
$$
C = \{\beta\in \R^n: \langle \beta , x_i\rangle \leq 0 \text{ for all } i=1,\ldots,m\}
$$
for some finite collection of vectors $x_1,\ldots,x_m\in \R^n$, where $\langle \cdot, \cdot \rangle$ denotes the standard Euclidean scalar product.
The fundamental Weyl chambers of types $A_{n-1}, B_n$ and $D_n$  are the polyhedral cones defined by
\begin{align*}
\mathcal{C}(A_{n-1})&:=\{\beta\in\R^n:\beta_1\ge\beta_2\ge\ldots\ge \beta_n\},\\
\mathcal{C}(B_n)&:=\{\beta\in\R^n:\beta_1\ge\beta_2\ge\ldots\ge \beta_n\ge 0\},\\
\mathcal{C}(D_n)&:=\{\beta\in\R^n:\beta_1\ge\beta_2\ge \ldots\ge \beta_{n-1} \ge |\beta_n|\},
\end{align*}
where $\beta = (\beta_1,\ldots, \beta_n)$ is the coordinate representation of $\beta\in\R^n$.

In this paper, we shall be interested in the conic intrinsic volumes of the Weyl chambers. The conic intrinsic volumes of cones are analogues of the classical Euclidean intrinsic volumes of convex bodies in the setting of conic or spherical geometry. A recent increase of interest to conic intrinsic volumes is due to their relevance in convex optimization~\cite{Amelunxen2014,amelunxen_buergisser,amelunxen_buergisser_grass}.  Let us briefly define the conic intrinsic volumes, referring to Section~\ref{subsec:conic} for more details and to~\cite[Section 6.5]{Schneider2008} and~\cite{Amelunxen2014,Amelunxen2017} for an extensive account of the theory.
Given some point $x\in\R^n$, the Euclidean projection $\Pi_C(x)$ of $x$ to a polyhedral cone $C\subseteq \R^n$ is the unique vector $y\in C$ minimizing the Euclidean distance $\|x-y\|$.
For $k\in \{0,\ldots,n\}$, the $k$-th conic intrinsic volume $\upsilon_k(C)$ of $C$ is defined as the probability that the Euclidean projection $\Pi_C(g)$ of an $n$-dimensional standard Gaussian random vector $g$ on $C$ lies in the relative interior of a $k$-dimensional face of $C$ (meaning that the projection is contained  in this face but not in any face of smaller dimension).
The conic intrinsic volumes of the Weyl chambers are given by the following theorem.
\begin{satz}\label{satz_intro}
For all $n\in \{1,2,\ldots\}$ and all $k\in \{0,\ldots,n\}$ we have
\begin{align}\label{Eq_Intr_Vol_Weyl_Chamb}
\upsilon_k(\mathcal{C}(A_{n-1}))=\stirling{n}{k}\frac{1}{n!},
\quad
\upsilon_k(\mathcal{C}(B_n))=\frac{\stirlingb{n}{k}}{2^nn!}
\quad\text{and}\quad
\upsilon_k(\mathcal{C}(D_n))=\frac{D[n,k]}{2^{n-1}n!},
\end{align}
where the $\stirling{n}{k}$'s denote the Stirling numbers of the first kind, the $\stirlingb{n}{k}$'s their $B$-analogues and the $\stirlingd {n}{k}$'s their $D$-analogues defined as the coefficients of the following polynomials:
\begin{align*}
t(t+1)\cdot\ldots\cdot(t+n-1)&=\sum_{k=0}^n\stirling{n}{k}t^k,\\
(t+1)(t+3)\cdot\ldots\cdot(t+2n-1)&=\sum_{k=0}^n\stirlingb{n}{k}t^k,\\
(t+1)(t+3)\cdot\ldots\cdot(t+2n-3)(t+n-1)&=\sum_{k=0}^n\stirlingd {n}{k}t^k.
\end{align*}
\end{satz}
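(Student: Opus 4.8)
The plan is to use the conic Steiner-type formula expressing conic intrinsic volumes as a sum over faces of products of internal and external angles, namely
\[
\upsilon_k(C)=\sum_{F\in\F_k(C)}\gamma(F,C)\,\gamma(N(F,C),\{0\}^{\perp}),
\]
where $\F_k(C)$ is the set of $k$-dimensional faces, $\gamma(F,C)$ is the internal angle of $C$ at $F$, and the second factor is the external angle of $C$ at $F$ (the solid angle of the normal cone $N(F,C)$). The idea is that for the Weyl chambers the faces are completely explicit: a $k$-face of $\CC(A_{n-1})$ is obtained by choosing an ordered set partition of $\{1,\dots,n\}$ into $k$ blocks and requiring the $\beta_i$ to be equal within each block, with strict inequalities between consecutive blocks; analogously for $B_n$ (one distinguished block may be forced to equal $0$) and $D_n$. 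So the first step is to enumerate the faces and, crucially, to show that both the internal angle and the external angle of a given $k$-face depend only on $n$ and $k$ (not on the particular face), or more precisely that the sum over faces factors through the Stirling recursion.

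Concretely, for type $A$: the internal angle of $\CC(A_{n-1})$ at the face indexed by blocks of sizes $\lambda_1,\dots,\lambda_k$ is itself (a product of) the solid angle of a smaller Weyl chamber $\CC(A_{\lambda_j-1})$ within the block, and the external angle is governed by a chamber of type $A_{k-1}$ on the ``quotient'' coordinates. The key combinatorial identity will be that
\[
\sum \alpha(\CC(A_{\lambda_1-1}))\cdots\alpha(\CC(A_{\lambda_k-1}))\,(\text{ext.\ angle}) = \frac{1}{n!}\stirling{n}{k},
\]
where the sum runs over compositions; this should follow from the known product formula for the solid angle of $\CC(A_{n-1})$ together with the exponential/ordinary generating function identity $\prod_{j\ge 0}(\text{something}) = \sum_k \stirling{n}{k} t^k / n!$ encoding the Stirling numbers of the first kind. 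I would first establish the explicit formulas for all internal and external angles of the faces of $\CC(A_{n-1})$, $\CC(B_n)$, $\CC(D_n)$ as a separate lemma (this is exactly what the abstract promises), reducing everything to solid angles of lower-dimensional Weyl chambers of the same or mixed types, and then feed these into the Steiner formula.

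For types $B$ and $D$ the same scheme applies but the face poset is richer: a face may or may not ``touch'' the hyperplane $\beta_n=0$ (for $B$) or the pair of hyperplanes $\beta_n=\pm\beta_{n-1}$ (for $D$), so the internal angle becomes a product of $A$-type solid angles times one $B$- or $D$-type solid angle, and the generating-function bookkeeping must track this distinguished factor — which is precisely why the recurrence for $\stirlingb{n}{k}$ involves the factor $(t+2j-1)$ and the one for $\stirlingd{n}{k}$ has the anomalous last factor $(t+n-1)$. The main obstacle I anticipate is not the angle computations per se (the internal angles are essentially fractions coming from counting linear extensions / chambers of reflection arrangements, and the external angles reduce to the same via conic duality $N(F,C)$ being again a Weyl-type chamber), but rather assembling the sum over all faces into the clean closed form: one must correctly group faces by the composition of block sizes, apply the multinomial coefficient $\binom{n}{\lambda_1,\dots,\lambda_k}$ counting how many faces share a given block-size profile, and recognize the resulting sum as the coefficient extraction from the defining product. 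I would handle this by induction on $n$, peeling off the block containing (or the treatment of) the coordinate $\beta_n$, which turns the face-sum recursion into exactly the polynomial recursions $t(t+1)\cdots(t+n-1)$, $(t+1)(t+3)\cdots(t+2n-1)$, and $(t+1)\cdots(t+2n-3)(t+n-1)$ displayed in the statement.
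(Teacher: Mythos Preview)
Your overall framework—using the formula $\upsilon_k(C)=\sum_{F\in\F_k(C)}\alpha(F)\,\alpha(N(F,C))$ and then identifying all faces explicitly—is exactly what the paper does. However, several of your concrete claims about the ingredients are incorrect, and you are missing the mechanism that actually makes the sum collapse.

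First, the face count: a $k$-face of $\CC(A_{n-1})$ is indexed by a \emph{composition} $(i_1,\dots,i_k)$ of $n$ (equivalently by indices $1\le l_1<\dots<l_{k-1}\le n-1$), not by an ordered set partition. The blocks are forced to be consecutive intervals because the chamber already imposes $\beta_1\ge\dots\ge\beta_n$; there is exactly one face per composition, and no multinomial coefficient appears.

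Second, neither angle is what you describe. The internal angle $\alpha(F)$ of the $k$-face with block sizes $(i_1,\dots,i_k)$ is \emph{not} a product of smaller Weyl-chamber angles; it is the single quantity
\[
\alpha(F)=\P\!\Big(\tfrac{\xi_1}{\sqrt{i_1}}\ge \tfrac{\xi_2}{\sqrt{i_2}}\ge\dots\ge \tfrac{\xi_k}{\sqrt{i_k}}\Big)
\]
for i.i.d.\ standard Gaussians $\xi_j$, and it genuinely depends on the composition. The normal cone $N(F,\CC(A_{n-1}))$ is not an $A_{k-1}$ chamber on quotient coordinates; it factors as an orthogonal product $D_{i_1}\times\dots\times D_{i_k}$ of ``bridge'' cones $D_m=\{x\in\R^m:x_1\le0,\dots,x_1+\dots+x_{m-1}\le0,\;x_1+\dots+x_m=0\}$, each of angle $1/m$, so the external angle is $1/(i_1\cdots i_k)$.

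The step you are missing is the symmetrization that replaces induction: summing $\alpha(F)\,\alpha(N(F,C))$ over all compositions and then averaging over all $k!$ permutations of $(i_1,\dots,i_k)$, the internal-angle probabilities combine (by exchangeability of the $\xi_j$) to give $1$, leaving precisely $\frac{1}{k!}\sum_{i_1+\dots+i_k=n}\frac{1}{i_1\cdots i_k}=\frac{1}{n!}\stirling{n}{k}$. For $B_n$ the same scheme works, but the normal cone acquires one extra factor $B_{n-l_k}=\{x:x_1\le0,\dots,x_1+\dots+x_m\le0\}$ with angle $\binom{2m}{m}4^{-m}$, and one symmetrizes over $\{\pm1\}^k\times\mathcal S_k$; this yields $\stirlingb{n}{k}$ via the identity $\stirlingb{n}{k}=\sum_r 2^{n-k-2r}\binom{2r}{r}\stirling{n-r}{k}\frac{n!}{(n-r)!}$. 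For $D_n$ the paper does not analyse faces at all but uses additivity of $\upsilon_k$ after writing $\CC(D_n)$ as a union of two $B_n$-chambers meeting in a $B_{n-1}$-chamber. Your proposed block-peeling induction does not obviously reproduce any of this, and the displayed ``key combinatorial identity'' is not well-posed as stated.
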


The theorem is known and appeared in various, seemingly unrelated  forms. One of the approaches to prove it is to relate the conic intrinsic volumes to the coefficients of the characteristic polynomial of the hyperplane arrangement generating the Weyl chambers; see~\cite[Theorems~3,4]{drton_klivans}, \cite[Theorem~5]{klivans_swartz}, \cite[Section~6]{AmelunxenLotzDCG17}, \cite{schneider_combinatorial}, \cite[Theorems~4.1 and~4.2]{KVZ15}.  The characteristic polynomials of Weyl arrangements can be computed by the finite field method; see Section 5.1 in~\cite{stanley_book} or Section~1.7.4 in~\cite{bona_handbook}.


On the other hand, the $A_{n-1}$-case of the formula~\eqref{Eq_Intr_Vol_Weyl_Chamb} is closely related to a result,  due to Sparre Andersen~\cite[Theorem~5]{Sparre},  on the least concave majorant of a random walk. Consider a random walk given by $S_k := g_1+\ldots+g_k$, $1\leq k\leq n$, and $S_0:=0$, where the increments $g_1,\ldots,g_n$ are independent standard Gaussian random variables. (In fact, the result stated below holds, essentially, under the assumption of exchangeability of the increments). The graph of the random walk is the set in $\R^2$ consisting of the points $(k,S_k)$, for $k\in \{0,1,\ldots,n\}$. The least concave majorant of the random walk is the smallest concave function defined on $[0,n]$ that stays above this graph. This function is piecewise linear. If $N_n$ denotes the number of linearity segments of the least concave majorant, then a formula due to Sparre Andersen~\cite[Theorem~5]{Sparre} states that
\begin{equation}\label{eq:sparre}
\P[N_n = k] = \stirling{n}{k}\frac{1}{n!}, \qquad k\in \{1,\ldots,n\}.
\end{equation}
For further development of this probabilistic result we refer to~\cite{Abramson+Pitman:2011,Abramson+Pitman+Ross+Bravo:2011}. Now, the projection of the point $g=(g_1,\ldots,g_n)$ on the Weyl chamber $\mathcal{C}(A_{n-1})$ can explicitly be described in terms of the least concave majorant of the corresponding random walk, and, in particular, it is possible to check that the number of linearity segments of the majorant coincides with the dimension of the unique face $F$ such that $\Pi_{\mathcal{C}(A_{n-1})}(g)$ lies in the relative interior of $F$. With this knowledge, the $A_{n-1}$-case of the formula~\eqref{Eq_Intr_Vol_Weyl_Chamb} can be viewed as a consequence of Sparre Andersen's formula~\eqref{eq:sparre}.


In the present paper, we shall give yet another proof of Theorem~\ref{satz_intro}. The starting point of this proof is a formula, see~\eqref{Eq_In_Vol_InternalExternal} below, expressing the conic intrinsic volumes of a polyhedral cone in terms of the internal and external angles of its  faces.   Probably the main advantage of our proof is an explicit identification of all internal and external angles involved in this formula.
The determination of the external angles is closely related to a result of Gao and Vitale~\cite{Gao2001} who computed the classical intrinsic volumes of the Schl\"afli orthoscheme which is defined as the simplex in $\R^n$ with the vertices $0$ and $e_1+\ldots+e_i$, $1\leq i\leq n$, where $e_1,\ldots,e_n$ denotes the standard orthonormal basis in $\R^n$.
Equivalently, this simplex is given by
\begin{equation}\label{eq:gao_vitale_simplex}
\{\beta\in\R^n:1\ge \beta_1\ge\beta_2\ge\ldots\ge \beta_n\geq 0\}.
\end{equation}
Later, Gao~\cite{gao} computed the intrinsic volumes of the simplex with the vertices  $e_1+\ldots+e_i - \frac in (e_1+\ldots+e_n)$, $1\leq i \leq n$, which can be also given by
$$
\{\beta\in\R^n: \beta_1\ge\beta_2\ge\ldots\ge \beta_n, \beta_1+\ldots+\beta_n = 0, \beta_1-\beta_n \leq 1\}.
$$
These simplices are closely related to the Weyl chambers of types $B_n$ and $A_{n-1}$, respectively. Both the computation of the external angles of the faces of the Weyl chambers (which follows the method of Gao and Vitale~\cite{Gao2001} and Gao~\cite{gao}), and the computation of the  internal angle sums proceed by re-arranging the solid angles under interest in such a way that they cover the whole space, from which we conclude that the sum of the angles is $1$.

The rest of the paper is mostly devoted to the proof of Theorem~\ref{satz_intro}.

\section{Preliminaries}

\subsection{Conic intrinsic volumes and solid angles} \label{subsec:conic}

In this section we collect some information on polyhedral cones (called just cones, for simplicity) that is needed in later proofs. A supporting hyperplane for a cone $C\subseteq \R^n$ is a linear hyperplane $H$ with the property that  $C$ lies entirely in one of the closed half-spaces $H^+$ and $H^-$ induced by $H$. A \textit{face} of $C$ is a set of the form $F=C\cap H$, for a supporting hyperplane $H$, or the cone $C$ itself. We denote by $\F_k(C)$ the set  of all  $k$-dimensional faces of $C$, for $k\in \{0,\ldots,n\}$. The dimension of a face $F$ is defined as the dimension of its linear hull, i.e.~$\dim F=\dim\lin(F)$. Equivalently, the faces of $C$ are obtained by replacing some of the half-spaces whose intersection defines the polyhedral cone by their boundaries and taking the intersection.

The \textit{dual cone} (or the \textit{polar cone}) of a cone $C\subseteq \R^n$ is defined as
\begin{align*}
C^\circ=\{x\in\R^n:\langle x,y\rangle\le 0\; \text{ for all } y\in C\}.
\end{align*}
For example, if $C=L$ is a linear subspace, then $C^\circ=L^\perp$ is its orthogonal complement. There is a one-to-one correspondence between the $k$-faces of $C$ and the $(n-k)$-faces of $C^\circ$ via the bijective mapping
\begin{align*}
\begin{array}{ccc}
\F_k(C) & \to & \F_{n-k}(C^\circ) \\
F & \mapsto & N(F,C)
\end{array},
\end{align*}
where $N(F,C):=(\lin F)^\perp\cap C^\circ$ is called the \textit{normal face} (or normal cone) of $F$ with respect to $C$.

The \textit{positive hull} of a finite set $\{x_1,\ldots,x_m\}\subseteq \R^n$ is defined as the smallest cone containing this set, that is
$$
\pos \{x_1,\dots,x_m\} = \{\lambda_1 x_1 + \ldots + \lambda_m x_m : \lambda_1\geq 0, \ldots, \lambda_m \ge 0\}.
$$
We will repeatedly make use of the well-known duality relations
\begin{align}\label{Eq_Duality_Pos_Cap}
\pos\{x_1,\dots,x_m\}^\circ=\bigcap_{i=1}^mx_i^-\quad\text{and}\quad\pos\{x_1,\dots,x_m\}=\bigg(\bigcap_{i=1}^mx_i^-\bigg)^\circ
\end{align}
for $x_1,\dots,x_m\in\R^n$ and $x_i^-:=\{v\in\R^n:\langle v,x_i\rangle\le 0\}$, $i=1,\dots,m$.

Now, we define the conic intrinsic volumes. The definition and further properties are taken from \cite[Section 2.2]{Amelunxen2017} and \cite[Section 2]{HugSchneider2016}.

Let $C\subseteq \R^n$ be a polyhedral cone, and $g$ be an $n$-dimensional standard Gaussian random vector. Then, for $k\in \{0,\ldots,n\}$, the $k$-th \textit{conic intrinsic volume} (or, for simplicity, just \textit{intrinsic volume}) of $C$ is defined by
\begin{align*}
\upsilon_k(C):=\sum_{F\in\F_k(C)}\PP(\Pi_C(g)\in\relint(F)),
\end{align*}
Here, $\Pi_C$ denotes the orthogonal projection on $C$, that is $\Pi_C(x)$ is the vector in $C$ minimizing the Euclidean distance to $x\in\R^n$. Also, $\relint(F)$ denotes the interior of $F$ taken with respect to its linear hull $\lin (F)$ as an ambient space.

The \textit{solid angle} (or just angle) of a cone $C\subseteq \R^n$, denoted by $\alpha(C)$, is defined  as
\begin{align*}
\alpha(C)=\P(Z\in C),
\end{align*}
where $Z$ is uniformly distributed on the unit sphere in the linear hull $\lin C$ of $C$. In fact, for the distribution of $Z$ we can choose any rotation invariant distribution living on the linear hull of $C$.
%
%
%
%
For a $k$-dimensional cone $C\subseteq\R^n$, $k\in\{1,\dots,n\}$, the $k$-th conic intrinsic volume coincides with the solid angle of $C$, that is
$$
\upsilon_k(C) = \alpha (C).
$$

The \textit{external angle} of a cone $C\subseteq\R^n$ at a face $F$ is defined as the solid angle of the normal face of $F$ with respect to $C$, that is the external angle is $\alpha(N(F,C))$.
The \textit{internal angle} of a face $F$ at $0$ is defined as $\alpha(F)$.
For the purpose of this paper, we need the well-known expression of the intrinsic volumes in terms of internal and external angles:
\begin{align}\label{Eq_In_Vol_InternalExternal}
\upsilon_k(C)
&	
=
\sum_{F\in\F_k(C)}\alpha(F)\alpha(N(F,C)),
\qquad k\in \{0,\ldots,n\}.
\end{align}
We shall compute the intrinsic volumes of Weyl chambers by evaluating the internal and external angles at their faces.

\subsection{Stirling numbers of the first kind and their generating functions}\label{Section_Stirling_numbers}
In this section we recall some facts on the Stirling numbers and their $B$- and $D$-analogues. These numbers, well known in combinatorics, appear in the formulas for the intrinsic volumes of the Weyl chambers.
The (unsigned) Stirling numbers of the first kind are denoted by $\stirling{n}{k}$ and defined as the coefficients of the polynomial
\begin{align}\label{Eq_Def_Stirling1}
t(t+1)\cdot\ldots\cdot(t+n-1)=\sum_{k=1}^n\stirling{n}{k}t^k.
\end{align}
By convention, $\stirling{n}{k}=0$ for $n\in \N$, $k\notin\{1,\dots,n\}$.  Equivalently, $\stirling{n}{k}$ can be defined as the number of permutations of  the set $\{1,\dots,n\}$ having exactly $k$ cycles. Other representations of the Stirling numbers of the first kind are known, e.g.
\begin{align}\label{Eq_Stirling1_AsComp}
\stirling{n}{k}=\frac{n!}{k!}\sum_{\substack{i_1,\dots,i_k\in\N\\i_1+\ldots+i_k=n}}\frac{1}{i_1i_2\cdot \ldots\cdot i_k};
\end{align}
see \cite[(1.9) and (1.13)]{Pitman2006}. We shall also need the generating functions of the Stirling numbers of the first kind:
\begin{align}
\sum_{n=0}^\infty  \stirling{n}{k} \frac{t^n}{n!}=\frac{(-\log(1-t))^k}{k!}\quad\text{and}\quad    \sum_{n=0}^\infty\sum_{k=0}^n\stirling{n}{k}\frac{t^n}{n!}y^k=(1-t)^{-y},\label{Eq_Gen_Stirling1A}
\end{align}
for all complex $t$ such that $|t|<1$ and all $y\in\C$. By convention,  $\stirling{0}{0} = 1$.

The $B$-analogues of the (signless) Stirling numbers of the first kind are denoted by $\stirlingb{n}{k}$ and defined as the coefficients of the polynomial
\begin{align}\label{Eq_Def_Stirling1B}
(t+1)(t+3)\cdot\ldots\cdot(t+2n-1)=\sum_{k=0}^n\stirlingb{n}{k}t^k.
\end{align}
Again, by convention, we put $\stirlingb{n}{k}=0$ for $k\notin\{0,\dots,n\}$. These numbers appear as entry A028338 (or A039757 for the signed version) in the On-Line Encyclopedia of Integer Sequences~\cite{sloane} and also in~\cite{Amelunxen2017,bagno_biagioli_garber_some_identities,bagno_garber_balls,bala_stirling,dowling,drton_klivans,
henze_orakel,KVZ17,KVZ15,lang_stirling,suter}.
As follows easily from their definition,  the numbers $\stirlingb nk$ satisfy the recurrence relation
\begin{align}\label{Eq_Recurrence_B(n,k)}
\stirlingb {n}{k}=(2n-1)\stirlingb {n-1}{k}+\stirlingb {n-1}{k-1};
\end{align}
see \cite[Section 2.2]{KVZ15}.
The following proposition collects some useful explicit formulas for $B[n,k]$ including their exponential generating function. 

\begin{prop}\label{prop:formulas_B(n,k)}
The $B$-analogues $\stirlingb{n}{k}$ of the Stirling numbers of the first kind are explicitly given by the formulas
\begin{align}\label{Eq_B(n,k)_expl}
&\stirlingb{n}{k}=\sum_{i=k}^n2^{n-i}\binom{i}{k}\stirling{n}{i},\\
&\stirlingb{n}{k}=\sum_{r=0}^{n-k}2^{n-k-2r}\binom{2r}{r}\stirling{n-r}{k}\frac{n!}{(n-r)!},\label{Eq_Equality_Generating_functions}
\end{align}
for $k\in\{0,\dots,n\}$. The exponential generating function of the array $(\stirlingb {n}{k})_{n,k\ge 0}$ is given by
\begin{align}\label{eq:generating_fct_B}
\sum_{n=0}^{\infty}\sum_{k=0}^{n}\stirlingb {n}{k}\frac{t^n}{n!}y^k=(1-2t)^{-\frac{1}{2}(y+1)}
\end{align}
for all complex $|t|<1/2$ and $y\in \C$. By convention, $\stirlingb{0}{0} = 1$. 
\end{prop}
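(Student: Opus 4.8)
The plan is to derive the three stated identities for $\stirlingb{n}{k}$ directly from the generating-function identity \eqref{eq:generating_fct_B}, which I would establish first; the two explicit formulas \eqref{Eq_B(n,k)_expl} and \eqref{Eq_Equality_Generating_functions} then follow by factoring the bivariate generating function through known expansions.

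\emph{Step 1: the exponential generating function.} I would start from the defining product \eqref{Eq_Def_Stirling1B}. Observe that for $|t|<1/2$ and $y\in\C$,
\begin{align*}
\sum_{n=0}^\infty\sum_{k=0}^n\stirlingb{n}{k}\frac{t^n}{n!}y^k
=\sum_{n=0}^\infty\frac{t^n}{n!}\prod_{j=1}^n(y+2j-1).
\end{align*}
Writing $y+2j-1 = 2\bigl(\tfrac{y+1}{2}+j-1\bigr)$, the product becomes $2^n\prod_{j=1}^n\bigl(\tfrac{y+1}{2}+j-1\bigr) = 2^n\,(\tfrac{y+1}{2})_n$, a rising factorial. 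Hence the sum equals $\sum_{n=0}^\infty (2t)^n (\tfrac{y+1}{2})_n/n! = (1-2t)^{-(y+1)/2}$ by the binomial series $\sum_n (a)_n z^n/n! = (1-z)^{-a}$, valid for $|z|<1$. This also handles the convention $\stirlingb{0}{0}=1$ since the empty product is $1$. Alternatively, one can verify \eqref{eq:generating_fct_B} from the recurrence \eqref{Eq_Recurrence_B(n,k)} by checking that both sides satisfy the same first-order PDE in $t$ with the same initial condition, but the direct computation above is cleaner.

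\emph{Step 2: the two explicit formulas.} For \eqref{Eq_B(n,k)_expl}, I would factor
\begin{align*}
(1-2t)^{-\frac{1}{2}(y+1)} = (1-2t)^{-\frac{1}{2}}\cdot(1-2t)^{-\frac{y}{2}},
\end{align*}
expand the second factor using the second identity in \eqref{Eq_Gen_Stirling1A} with $2t$ in place of $t$ and $y/2$ in place of $y$, namely $(1-2t)^{-y/2} = \sum_{i}\sum_{m\ge i}\stirling{m}{i}\frac{(2t)^m}{m!}(y/2)^i$, and expand $(1-2t)^{-1/2}=\sum_{r}\binom{2r}{r}t^r$. Collecting the coefficient of $t^n y^k/n!$ and simplifying the powers of $2$ and the binomial/Stirling factors should yield \eqref{Eq_B(n,k)_expl} directly — in fact this route naturally produces a double sum; to get the cleaner single-sum form \eqref{Eq_B(n,k)_expl} it is slightly more efficient to instead substitute the \emph{first} generating function in \eqref{Eq_Gen_Stirling1A}, or to argue combinatorially from the cycle interpretation. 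Concretely, from $(1-2t)^{-(y+1)/2} = \bigl((1-2t)^{-1/2}\bigr)^{y+1}$ one sees $\stirlingb{n}{k}$ counts weighted structures; comparing with $(1-t')^{-y'}=\sum\stirling{m}{i}\frac{t'^m}{m!}y'^i$ after the substitution $t'=2t$, $y'=i$-aggregation gives $\stirlingb{n}{k} = \sum_{i=k}^n \binom{i}{k} 2^{n-i}\stirling{n}{i}$ once one accounts for the extra factor $(1-2t)^{-1/2}$ being absorbed as the "$+1$" shift — this is the identity \eqref{Eq_B(n,k)_expl}. For \eqref{Eq_Equality_Generating_functions}, I would keep the split $(1-2t)^{-1/2}(1-2t)^{-y/2}$, use $(1-2t)^{-1/2}=\sum_{r\ge0}\binom{2r}{r}\frac{(2t)^r}{2^{2r}}\cdot 2^{r}$... more carefully $(1-2t)^{-1/2}=\sum_{r\ge 0}\binom{2r}{r}t^r$, and expand $(1-2t)^{-y/2}=\sum_{m\ge0}\sum_{k=0}^m\stirling{m}{k}2^{m-k}\frac{t^m}{m!}\cdot$ ... then the Cauchy product in $t$ gives, as coefficient of $t^n y^k / n!$ after reindexing $m=n-r$,
\begin{align*}
\stirlingb{n}{k}=\sum_{r=0}^{n-k}\binom{2r}{r}\,2^{(n-r)-k}\,\stirling{n-r}{k}\,\frac{n!}{(n-r)!},
\end{align*}
which is \eqref{Eq_Equality_Generating_functions} after noting $2^{(n-r)-k} = 2^{n-k-2r}\cdot 2^{r}$... here I need $2^r$ from somewhere, so the correct bookkeeping is that the expansion of $(1-2t)^{-1/2}$ contributes $\binom{2r}{r}t^r$ (no extra $2^r$ since $(1-2t)^{-1/2}=\sum\binom{2r}{r}\frac{(2t)^r}{4^r}$ is \emph{wrong}; rather $(1-4u)^{-1/2}=\sum\binom{2r}{r}u^r$ so with $u=t/2$, $2t=4u$: $(1-2t)^{-1/2}=\sum\binom{2r}{r}(t/2)^r=\sum\binom{2r}{r}2^{-r}t^r$), giving the factor $2^{-r}$, and combining $2^{(n-r)-k}\cdot 2^{-r}=2^{n-k-2r}$ exactly matches \eqref{Eq_Equality_Generating_functions}.

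\emph{Main obstacle.} The substantive content is entirely in Step 1, which is short; the real work is the careful power-of-two bookkeeping in Step 2, where the half-integer exponent forces one to track $(1-2t)^{-1/2}=\sum_{r\ge0}\binom{2r}{r}2^{-r}t^r$ correctly and to reindex the Cauchy product so that the Stirling number acquires arguments $n-r$ and $k$. I expect no conceptual difficulty, only the need to be meticulous so that \eqref{Eq_B(n,k)_expl} comes out as a single sum (one must resum the $r$-variable, using a Vandermonde-type identity $\sum_r \binom{2r}{r}2^{-r}\cdot(\text{shifted Stirling}) = (\text{binomial})\cdot\stirling{n}{i}$, which is where the combinatorial interpretation via "$i$ cycles, choose $k$ of them to keep" gives the cleanest derivation). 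As a sanity check I would verify all three formulas against the recurrence \eqref{Eq_Recurrence_B(n,k)} and small cases $n\le 2$.
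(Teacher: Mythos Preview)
Your overall strategy is sound and close to the paper's: both derive the bivariate generating function \eqref{eq:generating_fct_B} from the rising-factorial identity $\prod_{j=1}^n(y+2j-1)=2^n\bigl(\tfrac{y+1}{2}\bigr)_n$ together with the binomial series, and both obtain \eqref{Eq_Equality_Generating_functions} by recognising the factorisation $(1-2t)^{-(y+1)/2}=(1-2t)^{-1/2}(1-2t)^{-y/2}$. Your Cauchy-product computation for \eqref{Eq_Equality_Generating_functions}, once you corrected the expansion $(1-2t)^{-1/2}=\sum_{r\ge 0}\binom{2r}{r}2^{-r}t^r$, is correct and is essentially the same argument the paper gives (the paper computes the one-variable generating function $a_k(t)$ of the right-hand side and matches it to $2^{-k}(-\log(1-2t))^k/(k!\sqrt{1-2t})$, which is the same identity read in the other direction).

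Where your proposal is weaker is the derivation of \eqref{Eq_B(n,k)_expl}. You oscillate between the split $(1-2t)^{-1/2}(1-2t)^{-y/2}$ (which naturally yields \eqref{Eq_Equality_Generating_functions}, not \eqref{Eq_B(n,k)_expl}) and a vague ``$+1$ shift'' argument, and you end by anticipating a ``Vandermonde-type'' resummation. None of this is needed. The paper does not go through the generating function at all for \eqref{Eq_B(n,k)_expl}: it simply verifies in four lines that $\sum_{k}\bigl(\sum_{i\ge k}2^{n-i}\binom{i}{k}\stirling{n}{i}\bigr)t^k=\prod_{j=1}^n(t+2j-1)$ by swapping the order of summation, applying the binomial theorem to get $\sum_i 2^{n-i}\stirling{n}{i}(t+1)^i$, and then using \eqref{Eq_Def_Stirling1} with $t$ replaced by $(t+1)/2$. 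If you prefer to stay with generating functions, the clean route is to write $(1-2t)^{-(y+1)/2}=\sum_{n,i}\stirling{n}{i}\frac{(2t)^n}{n!}\bigl(\tfrac{y+1}{2}\bigr)^i$ via \eqref{Eq_Gen_Stirling1A} and then expand $\bigl(\tfrac{y+1}{2}\bigr)^i=2^{-i}\sum_{k}\binom{i}{k}y^k$; this gives the single sum \eqref{Eq_B(n,k)_expl} immediately, with no extra identity required.
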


In Entry A028338 of~\cite{sloane} the explicit formula~\eqref{Eq_B(n,k)_expl} for the number $\stirlingb{n}{k}$ in terms of the Stirling numbers of the first kind was stated by F.\ Woodhouse without a proof.

\begin{proof}[Proof of Proposition~\ref{prop:formulas_B(n,k)}]
We start with the proof of~\eqref{Eq_B(n,k)_expl}. We want to check whether the numbers on the right-hand side of~\eqref{Eq_B(n,k)_expl} coincide with the coefficients of the polynomial in~\eqref{Eq_Def_Stirling1B}. We have
\begin{align*}
\sum_{k=0}^n\sum_{i=k}^n2^{n-i}\binom{i}{k}\stirling{n}{i}t^k
&	=2^n\sum_{i=0}^n2^{-i}\stirling{n}{i}\bigg(\sum_{k=0}^i\binom{i}{k}t^k\bigg)	=2^n\sum_{i=0}^n\stirling{n}{i}\Big(\frac{t+1}{2}\Big)^i\\
&	=2^n\Big(\frac{t+1}{2}\Big)\Big(\frac{t+1}{2}+1\Big)\cdot\ldots\cdot\Big(\frac{t+1}{2}+n-1\Big)\\
&	=(t+1)(t+3)\cdot\ldots\cdot(t+2n-1)
\end{align*}
using the Binomial Theorem and~\eqref{Eq_Def_Stirling1}, which completes the proof of~\eqref{Eq_B(n,k)_expl}.

Next, we prove~\eqref{eq:generating_fct_B}.  By~\eqref{Eq_Def_Stirling1B} and~\eqref{Eq_Def_Stirling1} we have
\begin{align*}
\sum_{k=0}^n \stirlingb {n}{k}y^k
=
2^n\Big(\frac{y+1}{2}\Big)\Big(\frac{y+1}{2}+1\Big)\cdot\ldots\cdot\Big(\frac{y+1}{2}+n-1\Big)
=\sum_{i=0}^n 2^n\stirling{n}{i}\Big(\frac{y+1}{2}\Big)^i.
\end{align*}
Thus, the generating function is given by
\begin{align*}
\sum_{n=0}^\infty\sum_{k=0}^{n}\stirlingb {n}{k}\frac{t^n}{n!} y^k=\sum_{n=0}^\infty\sum_{i=0}^n\stirling{n}{i}\Big(\frac{y+1}{2}\Big)^i\frac{(2t)^n}{n!}=(1-2t)^{-\frac{1}{2}(y+1)},
\end{align*}
where we used \eqref{Eq_Gen_Stirling1A} in the last step.

At last, we use both formulas~\eqref{Eq_B(n,k)_expl} and~\eqref{eq:generating_fct_B} to prove~\eqref{Eq_Equality_Generating_functions}. To this end, we compare the generating functions of the sequences on both sides of~\eqref{Eq_Equality_Generating_functions}.
Consider the following sum:
\begin{align*}
a_k(t):=\sum_{n=k}^\infty\left(\sum_{r=0}^{n-k}2^{n-k-2r}\binom{2r}{r}\stirling{n-r}{k}\frac{n!}{(n-r)!}\right)\frac{t^n}{n!},
\qquad k\in \N_0. 
\end{align*}
Then, we get
\begin{align}
a_k(t)\label{Eq_Gen_FUnction_Bn_Sum1}
&	=\sum_{n=k}^\infty\sum_{r=0}^{n-k}2^{n-k-2r}\binom{2r}{r}\stirling{n-r}{k}\frac{t^n}{(n-r)!}=\sum_{r=0}^{\infty}\sum_{n=r+k}^\infty 2^{n-k-2r}\binom{2r}{r}\stirling{n-r}{k}\frac{t^n}{(n-r)!}\notag\\
&	=\sum_{r=0}^{\infty}2^{-r-k}\binom{2r}{r}t^r\left(\,\sum_{n=r+k}^\infty\stirling{n-r}{k}\frac{(2t)^{n-r}}{(n-r)!}\right).
\end{align}
By shifting the index in the inner sum and using the generating function of the Stirling numbers of the first kind stated in~\eqref{Eq_Gen_Stirling1A}, we can rewrite~\eqref{Eq_Gen_FUnction_Bn_Sum1} to obtain
\begin{align*}
a_k(t) &= \sum_{r=0}^{\infty}2^{-r-k}\binom{2r}{r}t^r\left(\,\sum_{n=k}^\infty\stirling{n}{k}\frac{(2t)^{n}}{n!}\right)
	=\sum_{r=0}^{\infty}2^{-r-k}\binom{2r}{r}t^r\frac{(-\log(1-2t))^k}{k!}\\
&	=2^{-k}\frac{(-\log(1-2t))^k}{k!}\sum_{r=0}^\infty \Big(\frac{t}{2}\Big)^r\binom{2r}{r}	=2^{-k}\frac{(-\log(1-2t))^k}{k!}\frac{1}{\sqrt{1-2t}}
\end{align*}
for $|t|<1/2$. Thus, the generating function of the right-hand side of~\eqref{Eq_Equality_Generating_functions} is given by
\begin{align*}
\sum_{k=0}^\infty a_k(t)y^k
	=\frac{1}{\sqrt{1-2t}}\exp\Big(-\frac{y}{2}\log(1-2t)\Big)
	=(1-2t)^{-\frac{1}{2}(y+1)},
\end{align*}
which coincides with the generating function of $(\stirlingb{n}{k})_{n,k\ge 0}$. This completes the proof.
\end{proof}

The $D$-analogues of the (signless) Stirling numbers of the first kind are denoted by $\stirlingd {n}{k}$ and defined as the coefficients of the polynomial
\begin{align}\label{eq:D_n_k}
(t+1)(t+3)\cdot\ldots\cdot(t+2n-3)(t+n-1)=\sum_{k=0}^n\stirlingd {n}{k}t^k.
\end{align}
By convention, $\stirlingd {n}{k}=0$ for $k\notin\{0,\dots,n\}$. The signed version of these numbers appears as entry A039762 in \cite{sloane}. The $D$-analogues can be expressed through the $B$-analogues by
\begin{align}\label{Eq_Relation_D(n,k)_B(n,k)}
\stirlingd {n}{k}=(n-1)\stirlingb {n-1}{k}+\stirlingb {n-1}{k-1} =\stirlingb{n}{k}-n\stirlingb{n-1}{k};
\end{align}
see \cite[Section 2.2]{KVZ15} and use~\eqref{Eq_Recurrence_B(n,k)}.

In  probabilistic terms, \eqref{Eq_Def_Stirling1}, \eqref{Eq_Def_Stirling1B} and~\eqref{eq:D_n_k} state that
$$
\P(N_n = k) =\stirling{n}{k}\frac{1}{n!},
\quad
\P(N_n^{B} = k) = \frac{\stirlingb{n}{k}}{2^nn!},
\quad
\P(N_n^{D} = k) = \frac{\stirlingd {n}{k}}{2^{n-1}n!},
$$
for all $k\in \{0,\ldots,n\}$, where $N_n, N_n^{B}$ and $N_n^{D}$ are random variables defined by
$$
N_n = \sum_{j=1}^n \text{Bern}\left(\frac 1j\right),
\quad
N_n^{B} = \sum_{j=1}^n \text{Bern}\left(\frac 1{2j}\right),
\quad
N_n^{D} = \sum_{j=1}^{n-1} \text{Bern}\left(\frac 1{2j}\right) + \text{Bern}\left(\frac 1{n}\right),
$$
and $\Bern(\cdot)$ are independent Bernoulli variables with corresponding parameters. It is well known that $N_n$ has the same distribution as the number of cycles in a uniform random permutation of $n$ elements. This can be deduced from the Feller coupling or from the Chinese restaurant construction of the uniform random permutation, see~\cite[Section~3.1]{Pitman2006}. Similarly, it can be shown that $N_n^{B}$ has the same distribution as the number \textit{even} cycles in a uniform random \textit{signed} permutation of $n$ elements, where a signed permutation is a pair $(\sigma,\eps)$ consisting of a permutation $\sigma$ acting on $\{1,\ldots,n\}$ together with a vector of signs $\eps = (\eps_1,\ldots, \eps_n)\in \{-1,+1\}^n$, and a cycle $i_1  \mapsto \ldots \mapsto i_k \mapsto i_1$ of $\sigma$ is called even if $\eps_{i_1} \cdot \ldots \cdot \eps_{i_k} = +1$. Observe that this gives a combinatorial interpretation of~\eqref{Eq_B(n,k)_expl}.
More generally, it is known from~\cite[5.3]{shephard_todd} or~\cite[p.~59, p.~63]{humphreys_book}  that the numbers $\stirling{n}{k}$, $\stirlingb{n}{k}$ and $D[n,k]$ count  the elements in the reflection group of the corresponding type whose space of invariant vectors is $k$-dimensional.
For example, $N_n^{D}$ has the same distribution as the number of even cycles in a random uniform $D$-permutation of $n$ elements, where a $D$-permutation is a signed permutation $(\sigma,\eps)$ such that $\eps_1 \cdot \ldots \cdot \eps_n = 1$.

\section{Conic intrinsic volumes of Weyl chambers}
\label{Section_Intr_Vol_Weyl_chambers}

This section is dedicated to proving the formulas for the intrinsic volumes of the Weyl chambers stated in~\eqref{Eq_Intr_Vol_Weyl_Chamb}. To this end, we want to use the formula~\eqref{Eq_In_Vol_InternalExternal} and evaluate the internal and external angles of the faces of the Weyl chambers. The computation of the external angles relies on the ideas of Gao and Vitale~\cite{Gao2001} who derived the classical intrinsic volumes of the simplex~\eqref{eq:gao_vitale_simplex}.

\subsection{Type \texorpdfstring{$\boldsymbol{A_{n-1}}$}{A\_\{n-1\}}} We start with the simpler $A_{n-1}$ case. Recall the definition of the fundamental Weyl chamber of type $A_{n-1}$:
\begin{align*}
C^A:=\mathcal{C}(A_{n-1})=\{\beta\in\R^n:\beta_1\ge\beta_2\ge\ldots\ge\beta_n\}.
\end{align*}
Then our first result is the following.

\begin{satz}\label{Theorem_Intr_Vol_An-1}
For $k=1,\dots,n$ the $k$-th conic intrinsic volume of the Weyl chamber of type $A_{n-1}$ is given by
\begin{align*}
\upsilon_k(\mathcal{C}(A_{n-1}))=\stirling{n}{k}\frac{1}{n!},
\end{align*}
where the Stirling numbers $\stirling{n}{k}$ are defined as in Section~\ref{Section_Stirling_numbers}.
\end{satz}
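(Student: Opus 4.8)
The plan is to apply the internal–external angle formula~\eqref{Eq_In_Vol_InternalExternal} to $C^A$ and identify explicitly all three ingredients: the faces of $C^A$ in each dimension $k$, their internal angles, and the external angles of $C^A$ at those faces. The combinatorial structure here is transparent: a face $F$ of $C^A$ of dimension $k$ is obtained by choosing which of the defining inequalities $\beta_i\ge\beta_{i+1}$ are turned into equalities, i.e.\ by choosing an (ordered) composition $(i_1,\dots,i_k)$ of $n$ with $i_1+\dots+i_k=n$; the face is then the cone where the coordinates split into $k$ consecutive blocks of sizes $i_1,\dots,i_k$, with the common values on the blocks weakly decreasing. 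Such a face lives in the $k$-dimensional subspace $L$ on which coordinates within a block coincide, and inside $L$ it is itself a Weyl chamber of type $A_{k-1}$. Hence its internal angle is $\alpha(F)=\upsilon_k(\mathcal{C}(A_{k-1}))$ — but more usefully, I will compute the internal angles directly by a covering argument: the $k!$ cones obtained by permuting the block-values tile $L$ up to measure zero, so $\alpha(F)=1/k!$.

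Next I would compute the external angle $\alpha(N(F,C^A))$. The normal cone $N(F,C^A)=(\lin F)^\perp\cap (C^A)^\circ$ lives in the $(n-k)$-dimensional space orthogonal to $L$. By~\eqref{Eq_Duality_Pos_Cap}, $(C^A)^\circ$ is the positive hull of the vectors $e_1+\dots+e_j-\frac jn(e_1+\dots+e_n)$ (or more simply, working in the hyperplane $\sum\beta_i=0$, of the partial-sum directions), and intersecting with $(\lin F)^\perp$ decouples the problem over the $k$ blocks. Concretely, within block $\ell$ (of size $i_\ell$) the relevant cone is the normal cone at the apex $0$ of a Weyl chamber of type $A_{i_\ell-1}$, whose solid angle is the external angle computed à la Gao–Vitale: rearranging $i_\ell!$ congruent copies of this solid angle covers the sphere in the $(i_\ell-1)$-dimensional ambient space, but with the correct normalization one gets the factor $1/i_\ell$ per block. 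Multiplying over blocks, $\alpha(N(F,C^A))=\prod_{\ell=1}^k \frac{1}{i_\ell}$ — this is the step I expect to be the main obstacle, since it requires setting up the Gao–Vitale covering of the sphere carefully and checking that the pieces are congruent and non-overlapping, and that the product structure over blocks is legitimate.

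Granting these two computations, the formula~\eqref{Eq_In_Vol_InternalExternal} gives
\[
\upsilon_k(\mathcal{C}(A_{n-1}))
=\sum_{F\in\F_k(C^A)}\alpha(F)\,\alpha(N(F,C^A))
=\sum_{\substack{i_1,\dots,i_k\in\N\\ i_1+\dots+i_k=n}}\frac{1}{k!}\cdot\frac{1}{i_1 i_2\cdots i_k}
=\frac{1}{k!}\sum_{\substack{i_1,\dots,i_k\in\N\\ i_1+\dots+i_k=n}}\frac{1}{i_1\cdots i_k},
\]
where the sum ranges over ordered compositions of $n$ into $k$ positive parts. Comparing with the representation~\eqref{Eq_Stirling1_AsComp} of the Stirling numbers of the first kind, namely $\stirling{n}{k}=\frac{n!}{k!}\sum_{i_1+\dots+i_k=n}\frac{1}{i_1\cdots i_k}$, yields $\upsilon_k(\mathcal{C}(A_{n-1}))=\stirling{n}{k}\frac1{n!}$, as claimed. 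It remains only to double-check the bookkeeping: that every $k$-face arises from exactly one composition (so no overcounting in the sum), and that the degenerate cases ($k=n$, where $C^A$ is its own $n$-face with internal angle $1/n!$ and trivial external angle $1$; and $k=1$, the ray) are consistent with the formula.
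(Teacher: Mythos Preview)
Your overall strategy and the external-angle computation are on target, and your final formula is the right one, but there is a genuine gap in the internal-angle step. You claim that $\alpha(F)=1/k!$ for every $k$-face $F$, arguing that ``the $k!$ cones obtained by permuting the block-values tile $L$ up to measure zero''. The tiling is true, but those $k!$ cones are \emph{not} isometric unless the block sizes $i_1,\dots,i_k$ are all equal: the map sending the block-value vector $(\gamma_1,\dots,\gamma_k)$ to $(\gamma_{\sigma(1)},\dots,\gamma_{\sigma(k)})$ is linear on $L$ but not orthogonal, because the orthonormal coordinates on $L$ are $\eta_\ell=\sqrt{i_\ell}\,\gamma_\ell$. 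In those coordinates the face $F$ becomes $\{\eta\in\R^k:\eta_1/\sqrt{i_1}\ge\eta_2/\sqrt{i_2}\ge\ldots\ge\eta_k/\sqrt{i_k}\}$, which is \emph{not} a Weyl chamber of type $A_{k-1}$. Concretely, for $n=4$, $k=3$, $(i_1,i_2,i_3)=(1,1,2)$ one computes $\alpha(F)=\tfrac14+\tfrac{1}{2\pi}\arcsin(-1/\sqrt{3})\neq\tfrac16$.

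The paper fixes exactly this point by keeping the internal angle in the form
\[
\alpha\big(C^A(l_1,\dots,l_{k-1})\big)
=\P\!\left(\frac{\xi_1}{\sqrt{i_1}}\ge\frac{\xi_2}{\sqrt{i_2}}\ge\ldots\ge\frac{\xi_k}{\sqrt{i_k}}\right),
\]
multiplying by the external factor $\prod_\ell 1/i_\ell$ (which \emph{is} permutation-invariant in the $i_\ell$'s), and only \emph{then} symmetrizing: since permuting $(i_1,\dots,i_k)$ is a bijection on compositions and the $\xi_\ell$ are exchangeable, one may average over $\pi\in\mathcal S_k$ and use
\[
\sum_{\pi\in\mathcal S_k}\P\!\left(\frac{\xi_{\pi(1)}}{\sqrt{i_{\pi(1)}}}\ge\ldots\ge\frac{\xi_{\pi(k)}}{\sqrt{i_{\pi(k)}}}\right)=1
\]
to produce the factor $1/k!$ you need. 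In other words, the $1/k!$ is not the internal angle of any single face; it only emerges after grouping together the faces corresponding to all permutations of a given multiset of block sizes. With this correction your argument goes through and matches the paper's.
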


Before we start with the proof of Theorem~\ref{Theorem_Intr_Vol_An-1}, we compute the solid angle of the cone whose points correspond to ``bridges staying below zero''. 

\begin{lem}\label{lemma:angle_Dm}
The solid angle of the cone
\begin{align*}
D_m
	:=&\{x\in\R^m:x_1\le 0,x_1+x_2\le 0,\dots,x_1+\ldots+x_{m-1}\le 0,x_1+\ldots+x_m=0\}
\end{align*}
is given by $\alpha(D_m)=\frac 1m$, for $m\in\N$.
\end{lem}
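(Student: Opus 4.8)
The cone $D_m$ consists of those $x \in \R^m$ whose partial sums $s_j := x_1 + \dots + x_j$ satisfy $s_1 \le 0, \dots, s_{m-1} \le 0$ and $s_m = 0$; equivalently, identifying $x$ with the increments of a path $(s_0, s_1, \dots, s_m)$ with $s_0 = s_m = 0$, it is the cone of "bridge paths staying weakly below zero". I would first observe that $D_m$ is $(m-1)$-dimensional (it lives in the hyperplane $s_m = 0$) and that, by definition, $\alpha(D_m) = \PP(Z \in D_m)$ where $Z$ is uniform on the unit sphere of $\lin D_m = \{x : x_1 + \dots + x_m = 0\}$. Since any rotation-invariant distribution on that hyperplane may be used, I would take $Z$ to be (the projection onto, or simply the conditioning of) a standard Gaussian vector $g = (g_1, \dots, g_m)$ — more cleanly, take $g_1, \dots, g_m$ i.i.d.\ standard Gaussian and condition on $\{g_1 + \dots + g_m = 0\}$; the resulting law is rotation-invariant on the hyperplane, so $\alpha(D_m) = \PP(S_1 \le 0, \dots, S_{m-1} \le 0 \mid S_m = 0)$ where $S_j = g_1 + \dots + g_j$.

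**Main step: a cyclic-symmetry / Sparre Andersen argument.** The key is that the increments $g_1, \dots, g_m$, conditioned on $S_m = 0$, form an exchangeable sequence, and in particular their law is invariant under cyclic rotation of the indices. For a bridge path $s_0 = s_m = 0$ there is a classical fact (the cycle lemma / Sparre Andersen's combinatorial lemma in its bridge form): among the $m$ cyclic shifts of the increment sequence, exactly one produces a path all of whose partial sums $s_1, \dots, s_{m-1}$ are $\le 0$ — almost surely exactly one, because with probability one no two distinct partial sums coincide (the conditional law has a density). Concretely, if $k$ is the index at which $s_k$ attains its minimum (a.s.\ unique), then the cyclically shifted sequence starting at position $k+1$, namely $(g_{k+1}, \dots, g_m, g_1, \dots, g_k)$, has all partial sums $\le 0$, and this is the only shift with that property. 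By cyclic invariance of the conditional law, each of the $m$ shifts is equally likely to be "the" good one, so the probability that the unshifted path already lies in $D_m$ is $1/m$. This gives $\alpha(D_m) = 1/m$.

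**The obstacle and how I'd handle it.** The delicate point is making the "exactly one cyclic shift works" statement precise and arguing that the indicator $\1\{\text{shift } i \text{ works}\}$ has the same expectation for every $i$. The cyclic-invariance claim needs care: the conditional law of $(g_1, \dots, g_m)$ given $S_m = 0$ is genuinely invariant under the cyclic group (because the joint density of i.i.d.\ Gaussians is symmetric and the conditioning event is cyclically invariant), hence $\PP((g_1,\dots,g_m) \in D_m \mid S_m=0) = \PP((g_{i+1}, \dots, g_{i+m}) \in D_m \mid S_m = 0)$ for all $i$ (indices mod $m$). Summing over $i = 0, \dots, m-1$, the left sides sum to $m \cdot \alpha(D_m)$, while the right sides sum to $\E[\#\{i : \text{shift } i \in D_m\} \mid S_m = 0] = 1$ by the a.s.-uniqueness of the minimizer; the a.s.\ uniqueness follows because, conditionally on $S_m=0$, the vector $(S_1, \dots, S_{m-1})$ has a density on $\R^{m-1}$, so $\PP(S_i = S_j \text{ for some } i \ne j \mid S_m = 0) = 0$. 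I would also briefly verify the boundary case $m = 1$ ($D_1 = \{0\}$, angle $1 = 1/1$) and $m = 2$ by hand as a sanity check, and note that the argument is exactly the bridge version of Sparre Andersen's lemma already alluded to after equation~\eqref{eq:sparre}, so it could alternatively be cited; but the self-contained cyclic-symmetry proof is short enough to include.
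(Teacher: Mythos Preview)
Your approach is essentially the paper's: both arguments exploit the cyclic symmetry to show that the $m$ cyclic rotates of $D_m$ tile the hyperplane $H_m=\{x_1+\dots+x_m=0\}$ up to null sets, whence each has angle $1/m$; the paper phrases this geometrically (exhibiting the rotated cones $D_m^{(i)}$ and checking that they cover $H_m$ with lower-dimensional overlaps), while you phrase it probabilistically via a conditioned Gaussian bridge, but the content is the same Sparre Andersen cycle argument.

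One small slip to fix: the shift that makes all partial sums $\le 0$ starts after the index of the \emph{maximum} of the bridge, not the minimum. Indeed, shifting by $k$ gives new partial sums $s'_j = s_{(k+j)\bmod m} - s_k$, and these are all $\le 0$ precisely when $s_k=\max_i s_i$. (Equivalently, the paper's rewriting of $D_m^{(i)}$ as $\{x\in H_m: s_i\ge\max_j s_j\}$ shows the relevant extremum is the maximum.) With ``minimum'' replaced by ``maximum'' your argument is correct.
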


\begin{proof}[Proof of Lemma~\ref{lemma:angle_Dm}]
The idea is to define $m$ cones in $\R^m$ with the same solid angle as $D_m$, such that their union equals the linear subspace $H_m:=\{x\in\R^m:x_1+\ldots+x_m=0\}$ and their pairwise intersections are Lebesgue null sets (in the sense of the ambient linear subspace $H_m$). 
To this end, define
\begin{multline*}
D_m^{(i)}=\{(x_1,\dots,x_{m})\in\R^{m}:x_{i+1}\le 0, x_{i+1}+x_{i+2}\le 0,
\dots,\\
x_{i+1}+\ldots+x_{m}+x_1+\ldots+x_{i-1}\le 0,
x_{i+1}+\ldots+x_{m}+x_1+\ldots+x_{i}= 0\},
\end{multline*}
for $i\in\{0,\dots,m-1\}$, where obviously $D_m^{(0)}=D_m$. Observe  that $D_m^{(i)}=\mathcal O D_m$ for a suitable permutation matrix $\mathcal O \in\R^{m\times m}$, and thus, the rotation invariance of the spherical Lebesgue measure yields that the solid angle is the same for each $D_m^{(i)}$, $0\le i\le m$. Furthermore, it is easy to see that $D_m^{(i)}$ can be written as follows:
\begin{align*}
D_m^{(i)}=\big\{x\in\R^m:x_1+\ldots+x_m=0,x_1+\ldots+x_i\ge \max\{x_1,x_1+x_2,\ldots,x_1+\ldots+x_m\}\big\}.
\end{align*}
From this representation of $D_m^{(i)}$ we easily observe that for any point $x\in H_m$, there is a suitable index $i\in\{0,\dots,m-1\}$, namely the one where $x_1+\ldots + x_i$ is  maximal, such that $x\in D_m^{(i)}$. On the other hand, if $x$ is in the intersection of two cones $D_m^{(i)}$ and $D_m^{(j)}$, $i\neq j$, then $x$ belongs to $\{x\in\R^m:x_1+\ldots+x_m=0,x_1+\ldots+x_i=x_1+\ldots+x_j\}$, which is a subspace of dimension $m-2$, and thus, a Lebesgue null set (in the sense of the ambient subspace $H_m$). This yields that $\alpha(D_m)=\frac 1m$.
Note that a probabilistic version of this reasoning is due to Sparre Andersen~\cite{sparre_andersen1} who proved that the probability that a random bridge (satisfying certain minor conditions) of length $m$ stays below zero is $1/m$.
\end{proof}

\begin{proof}[Proof of Theorem~\ref{Theorem_Intr_Vol_An-1}]
In order to prove Theorem~\ref{Theorem_Intr_Vol_An-1}, we want to evaluate the internal and external angles of the faces of $C^A$. Observe that the faces of $C^A$ can be obtained by replacing some of the inequalities defining $C^A$ by the corresponding equalities. It follows that for $2\le k\le n$, any $k$-face of $C^A$ is determined by some collection of indices $1\le l_1<\ldots<l_{k-1}<n$ and given by
\begin{align*}
C^A(l_1,\dots,l_{k-1})=\{\beta\in\R^n: \beta_1=\ldots=\beta_{l_1}\ge \beta_{l_1+1}=\ldots=\beta_{l_2}\ge\ldots\ge\beta_{l_{k-1}+1}=\ldots=\beta_n\}.
\end{align*}
Then, \eqref{Eq_In_Vol_InternalExternal} yields the formula
\begin{align*}
\upsilon_k(C^A)
&	=\sum_{F\in\F_k(C^A)}\alpha(F)\alpha(N(F,C^A))\\
&	=\sum_{1\le l_1<\ldots<l_{k-1}\le n-1}\alpha\big(C^A(l_1,\dots,l_{k-1})\big)\alpha\big(N(C^A(l_1,\dots,l_{k-1}),C^A)\big).
\end{align*}
Note that $C^A$ has no vertices and the only $1$-face is $\{\beta\in\R^n:\beta_1=\ldots=\beta_n\}$, which corresponds to an empty collection of $l$'s.

\vspace*{2mm}
\noindent
\textbf{External Angles.}
Let $1\le k\le n$. Take some $1\le l_1<\ldots<l_{k-1}<n$. We start by computing the normal face $N(C^A(l_1,\dots,l_{k-1}),C^A)$. Our method of proof partially relies on the approach of Gao and Vitale~\cite{Gao2001}. By definition, we have
\begin{align*}
N(C^A(l_1,\dots,l_{k-1}),C^A)=(\lin C^A(l_1,\dots,l_{k-1}))^\perp\cap (C^A)^\circ.
\end{align*}
Using~\eqref{Eq_Duality_Pos_Cap}, we obtain
\begin{align*}
(C^A)^\circ
&	=\{x\in\R^n:x_1\ge x_2\ge\ldots\ge x_n\}^\circ\\
&	=\pos\{e_1,e_1+e_2,\dots,e_1+\ldots+e_n,-e_1-\ldots-e_n\}^\circ\\
&	=\{x\in\R^n:x_1\le 0,x_1+x_2\le 0,\dots,x_1+\ldots+x_{n-1}\le 0,x_1+\ldots+x_n=0\}.
\end{align*}
Here, $e_1,\dots,e_n$ denotes the standard Euclidean orthonormal basis of $\R^n$.
Furthermore,
\begin{align*}
&(\lin C^A(l_1,\dots,l_{k-1}))^\perp\\
&	\quad=\{x\in\R^n:x_1=\ldots=x_{l_1},x_{l_1+1}=\ldots=x_{l_2},\dots,x_{l_{k-1}+1}=\ldots=x_n\}^\perp\\
&	\quad=\{x\in\R^n:x_1+\ldots+x_{l_1}=0,x_{l_1+1}+\ldots+x_{l_2}=0,\dots,x_{l_{k-1}+1}+\ldots+x_n=0\}.
\end{align*}
The explicit representations of $(C^A)^\circ$ and $(\lin C^A(l_1,\dots,l_{k-1}))^\perp$ imply the following representation of the normal face as the orthogonal product of $k$ cones:
\begin{align*}
N(C^A(l_1,\dots,l_{k-1}),C^A)=D_{l_1}\times D_{l_2-l_1}\times\ldots\times D_{l_{k-1}-l_{k-2}}\times D_{n-l_{k-1}},
\end{align*}
where
\begin{align*}
D_m
	:=&\{x\in\R^m:x_1\le 0,x_1+x_2\le 0,\dots,x_1+\ldots+x_{m-1}\le 0,x_1+\ldots+x_m=0\}.
\end{align*}
In Lemma~\ref{lemma:angle_Dm} we proved that $\alpha (D_m) = 1/m$ holds for any $m\in\N$. Since the normal cone $N(C^A(l_1,\dots,l_{k-1}),C^A)$ is the orthogonal product of the $k$ cones $D_{l_1},D_{l_2-l_1},\dots,D_{n-l_{k-1}}$, it follows that
\begin{align}\label{Eq_External_Angle_An-1}
\alpha\big(N(C^A(l_1,\dots,l_{k-1}),C^A)\big)
	=\frac{1}{l_1(l_2-l_1)\cdot\ldots\cdot(n-l_{k-1})}.
\end{align}

\vspace*{2mm}
\noindent
\textbf{Internal Angles.}
Now, we need to compute the internal angles at the faces of $C^A$.  To this end, we consider the linear hull of $C^A(l_1,\dots,l_{k-1})$ which forms a $k$-dimensional linear subspace of $\R^n$. Recall that for $1\le l_1<\ldots<l_{k-1}\le n-1$ the linear hull of $C^A(l_1,\dots,l_{k-1})$ is given by
\begin{align*}
\lin C^A(l_1,\dots,l_{k-1})=\{\beta\in\R^n:\beta_1=\ldots=\beta_{l_1},\dots,\beta_{l_{k-1}+1}=\ldots=\beta_n\}.
\end{align*}
Thus, the vectors $y_i$ given by
\begin{multline*}
y_1=\frac{1}{\sqrt{l_1}}(\overbrace{1,\dots,1}^{1,\dots,l_1},0,\dots,0),
y_2=\frac{1}{\sqrt{l_2-l_1}}(0,\dots,0,\overbrace{1,\dots,1}^{l_1+1,\dots,l_2},0,\dots,0),\dots,\\ y_{k-1}=\frac{1}{\sqrt{l_{k-1}-l_{k-2}}}(0,\dots,0,\overbrace{1,\dots,1}^{l_{k-2}+1,\dots,l_{k-1}},0,\dots,0),y_{k}=\frac{1}{\sqrt{n-l_{k-1}}}(0,\dots,0,\overbrace{1,\dots,1)}^{l_{k-1}+1,\dots,n}
\end{multline*}
form an orthonormal basis of $\lin C^A(l_1,\dots,l_{k-1})$. For independent and standard Gaussian random variables $\xi_1,\dots,\xi_k$, the random vector $N:=\xi_1y_1+\dots+\xi_ky_k$ is $k$-dimensional standard Gaussian on the linear subspace $\lin C^A(l_1,\dots,l_{k-1})$. Thus, we obtain for the angle of $C^A(l_1,\dots,l_{k-1})$ the following formula:
\begin{align}\label{Eq_Internal_Angle_An-1}
\alpha(C^A(l_1,\dots,l_{k-1}))
&	=\P(N\in C^A(l_1,\dots,l_{k-1}))\notag\\
&	=\P\Big(\frac{\xi_1}{\sqrt{l_1}}\ge \frac{\xi_2}{\sqrt{l_2-l_1}}\ge\ldots\ge \frac{\xi_{k-1}}{\sqrt{l_{k-1}-l_{k-2}}}\ge \frac{\xi_k}{\sqrt{n-l_{k-1}}}\Big).
\end{align}

\vspace*{2mm}
\noindent
\textbf{Conic intrinsic volumes.}
Let $1\le k\le n$. Using the formulas~\eqref{Eq_In_Vol_InternalExternal},~\eqref{Eq_External_Angle_An-1} and~\eqref{Eq_Internal_Angle_An-1} we obtain
\begin{align*}
\upsilon_k(C^A)
&	=\sum_{1\le l_1<\ldots<l_{k-1}\le n-1}\alpha(C^A(l_1,\dots,l_{k-1}))\alpha(N(C^A(l_1,\dots,l_{k-1}),C^A))\\
&	=\sum_{1\le l_1<\ldots<l_{k-1}\le n-1}\frac{\P\Big(\frac{\xi_1}{\sqrt{l_1}}\ge \frac{\xi_2}{\sqrt{l_2-l_1}}\ge\ldots\ge \frac{\xi_{k-1}}{\sqrt{l_{k-1}-l_{k-2}}}\ge \frac{\xi_k}{\sqrt{n-l_{k-1}}}\Big)}{l_1(l_2-l_1)\cdot\ldots\cdot(l_{k-1}-l_{k-2})(n-l_{k-1})}.
\end{align*}
Defining $i_1=l_1,i_2=l_2-l_1,\dots,i_{k-1}=l_{k-1}-l_{k-2},i_k=n-l_{k-1}$, we can change the summation indices in the above sum and obtain
\begin{align}\label{Eq_Sum_1}
\upsilon_k(C^A)=\sum_{\substack{i_1,\dots,i_k\in\N\\i_1+\ldots+i_k= n}}\frac{1}{i_1i_2\cdot\ldots\cdot i_k}\P\Big(\frac{\xi_1}{\sqrt{i_1}}\ge \frac{\xi_2}{\sqrt{i_2}}\ge\ldots\ge \frac{\xi_{k-1}}{\sqrt{i_{k-1}}}\ge \frac{\xi_k}{\sqrt{i_k}}\Big).
\end{align}

Now, fix a permutation $\pi$ from the symmetric group $\mathcal{S}_k$. For each tuple $(i_1,\dots,i_k)\in\N^k$ satisfying $i_1+\ldots+i_k=n$, the tuple $(i_{\pi(1)},\dots,i_{\pi(k)})$ also satisfies $i_{\pi(1)}+\ldots +i_{\pi(k)}=n$. Thus, the sum in \eqref{Eq_Sum_1} does not change if we replace the tuple $(i_1,\dots,i_k)$ by $(i_{\pi(1)},\dots,i_{\pi(k)})$ inside the sum. It follows that
\begin{align*}
\upsilon_k(C^A)
&	=\frac{1}{k!}\sum_{\pi\in\mathcal{S}_k}\:\sum_{\substack{i_1,\dots,i_k\in\N\\i_1+\ldots+i_k= n}}\frac{1}{i_{\pi(1)}i_{\pi(2)}\cdot\ldots\cdot i_{\pi(k)}}\P\Big(\frac{\xi_1}{\sqrt{i_{\pi(1)}}}\ge \frac{\xi_2}{\sqrt{i_{\pi(2)}}}\ge\ldots\ge \frac{\xi_k}{\sqrt{i_{\pi(k)}}}\Big)\\
	&=\frac{1}{k!}\sum_{\substack{i_1,\dots,i_k\in\N\\i_1+\ldots+i_k= n}}\frac{1}{i_1i_2\cdot\ldots\cdot i_k}\sum_{\pi\in\mathcal{S}_k}\P\Big(\frac{\xi_1}{\sqrt{i_{\pi(1)}}}\ge \frac{\xi_2}{\sqrt{i_{\pi(2)}}}\ge\ldots\ge \frac{\xi_k}{\sqrt{i_{\pi(k)}}}\Big)\notag\\
	&=\frac{1}{k!}\sum_{\substack{i_1,\dots,i_k\in\N\\i_1+\ldots+i_k= n}}\frac{1}{i_1i_2\cdot\ldots\cdot i_k}\sum_{\pi\in\mathcal{S}_k}  \P\Big(\frac{\xi_{\pi(1)}}{\sqrt{i_{\pi(1)}}}\ge \frac{\xi_{\pi(2)}}{\sqrt{i_{\pi(2)}}}\ge\ldots\ge \frac{\xi_{\pi(k)}}{\sqrt{i_{\pi(k)}}}\Big)   \notag\\
&	=\frac{1}{k!}\sum_{\substack{i_1,\dots,i_k\in\N\\i_1+\ldots+i_k= n}}\frac{1}{i_1i_2\cdot\ldots\cdot i_k},
\end{align*}
where we used that $\xi_1,\dots,\xi_k$ are independent  and identically distributed, hence  exchangeable. Using the representation \eqref{Eq_Stirling1_AsComp} of the Stirling numbers of the first kind, we obtain
\begin{align*}
\upsilon_k(C^A)=\frac{1}{k!}\stirling{n}{k}\frac{k!}{n!}=\stirling{n}{k}\frac{1}{n!},
\end{align*}
which completes the proof.
\end{proof}
\subsection{Type \texorpdfstring{$\boldsymbol{B_n}$}{B\_n}}
Now, we proceed with the $B_n$-case. Recall the definition of the fundamental Weyl chamber of type $B_n$:
\begin{align*}
C^B:=\mathcal{C}(B_n)=\{\beta\in\R^n:\beta_1\ge\beta_2\ge\ldots\ge\beta_n\ge 0\}.
\end{align*}
Then the analogue of Theorem~\ref{Theorem_Intr_Vol_An-1} is the following.

\begin{satz}\label{Theorem_Intr_Vol_Bn}
For $k=0,1,\dots,n$ the $k$-th conic intrinsic volume of the Weyl chamber $\mathcal{C}(B_n)$ is given by
\begin{align*}
\upsilon_k(\mathcal{C}(B_n))=\frac{\stirlingb{n}{k}}{2^nn!}.
\end{align*}
\end{satz}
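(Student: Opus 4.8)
The plan is to mirror the $A_{n-1}$-proof, decomposing $\upsilon_k(C^B)$ into a sum over $k$-faces of $C^B$ of products of internal and external angles via~\eqref{Eq_In_Vol_InternalExternal}. First I would enumerate the $k$-faces of $C^B$: a $k$-face is obtained by turning some of the inequalities $\beta_1\ge\cdots\ge\beta_n\ge 0$ into equalities, which splits the coordinates into blocks of equal values, with the last block possibly being forced to zero. Concretely, a $k$-face is indexed either by a choice $1\le l_1<\cdots<l_{k-1}\le n-1$ together with the additional constraint $\beta_n\ge 0$ not yet saturated (this is the analogue of the $A$-case but with an extra inequality $\beta_{l_{k-1}+1}=\cdots=\beta_n\ge 0$), or by $1\le l_1<\cdots<l_k\le n$ where the last block is set to $0$. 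I would organise the $k$-faces according to whether the constraint $\beta_n\ge 0$ is active or not, each type contributing a family of faces whose internal and external angles must be computed.

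For the external angles I would compute $(C^B)^\circ$ using~\eqref{Eq_Duality_Pos_Cap}: since $C^B=\pos\{e_1,e_1+e_2,\dots,e_1+\cdots+e_n\}^\circ$ (note there is no $-e_1-\cdots-e_n$ generator now, reflecting the half-line constraint), we get $(C^B)^\circ=\{x:x_1\le 0,\ x_1+x_2\le 0,\dots,x_1+\cdots+x_n\le 0\}$, with the last relation an \emph{inequality} rather than an equality. Intersecting with $(\lin F)^\perp$ as in the $A$-case, the normal face of a $k$-face factors as an orthogonal product of cones $D_{m}$ (the bridge cones of Lemma~\ref{lemma:angle_Dm}) for the interior blocks, times one additional factor for the terminal block: either another $D_m$ if $\beta_n\ge 0$ is active (i.e.\ the last block is $\{0\}$), or the \emph{unrestricted-endpoint} cone $E_m:=\{x\in\R^m:x_1\le 0,\dots,x_1+\cdots+x_m\le 0\}$ (a walk staying below zero, no bridge condition) when the constraint is not saturated. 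The solid angle of $E_m$ is $1/(2^m m!)\cdot(\text{something})$—more precisely, by the Sparre Andersen / reflection-type argument, the probability that an exchangeable walk of length $m$ stays below zero equals a ratio that here I expect to give $\alpha(E_m)$ in terms of $\binom{2m}{m}$ or a product formula; I would state and prove this as a short lemma paralleling Lemma~\ref{lemma:angle_Dm}, realising $E_m$ as one of $2^m\cdot m!/(\dots)$ congruent pieces tiling $\R^m$, or more cleanly by a direct recursive/generating-function computation.

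For the internal angles, $\lin F$ has an orthonormal basis of normalised block-indicator vectors exactly as in the $A$-case, except that when $\beta_n\ge0$ is active the terminal block contributes no basis vector (that block is $\{0\}$), and when it is not active the corresponding coordinate $\xi_j$ is subject to an extra one-sided constraint $\xi_j/\sqrt{m}\ge 0$. Hence $\alpha(F)$ becomes a Gaussian orthant-type probability $\P(\xi_1/\sqrt{i_1}\ge\cdots\ge\xi_k/\sqrt{i_k}\ \text{(}\ge 0\text{)})$. Assembling everything, I would obtain a double sum over compositions $i_1+\cdots+i_k=n$ (with the last part either free or ``marked'' for the zero block), weighted by reciprocals of the parts and by these Gaussian probabilities, and then symmetrise over $\mathcal S_k$ using exchangeability of $\xi_1,\dots,\xi_k$—as in the $A$-case—to collapse the Gaussian probabilities to a clean combinatorial factor ($1/k!$ for the unordered chain, with the half-line constraint contributing a factor $1/2^{(\cdot)}$). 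The final step is to recognise the resulting sum as $\stirlingb nk/(2^n n!)$, which I would do via the explicit formula~\eqref{Eq_B(n,k)_expl} or the composition-type expression for $\stirlingb nk$ analogous to~\eqref{Eq_Stirling1_AsComp}; alternatively, one can compare exponential generating functions using~\eqref{eq:generating_fct_B}. The main obstacle I anticipate is the correct bookkeeping of the terminal block—getting the solid angle $\alpha(E_m)$ right and tracking the factor of $2$ it produces per such block, then checking that the symmetrisation over $\mathcal S_k$ still applies cleanly despite the asymmetry between the ``free'' terminal part and the other parts; handling this carefully (e.g.\ by first conditioning on which part plays the terminal role and then symmetrising) is where the proof requires care rather than routine imitation of the $A_{n-1}$ argument.
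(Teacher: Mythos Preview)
Your overall strategy matches the paper's, but you have the terminal normal-cone factor backwards, and your two-family enumeration of faces is an unnecessary complication that is likely causing the confusion. In the paper every $k$-face of $C^B$ is indexed by a single tuple $1\le l_1<\ldots<l_k\le n$ (your two cases are simply $l_k=n$ versus $l_k<n$), and the normal face factors orthogonally as $D_{l_1}\times\cdots\times D_{l_k-l_{k-1}}\times B_{n-l_k}$, where $B_m$ is exactly your unrestricted-endpoint cone $E_m$ (with $B_0=\{0\}$). Thus the $E_m$ factor appears precisely when $\beta_n\ge 0$ \emph{is} active (zero block present, $l_k<n$), not when it is inactive; when $l_k=n$ all factors are bridge cones $D_m$. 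The reason is that $(\lin F)^\perp$ imposes $x_{l_{k-1}+1}+\cdots+x_{l_k}=0$ on each nonzero block but imposes \emph{no} constraint on the coordinates of the zero block, so those coordinates inherit only the inequalities from $(C^B)^\circ$---giving $B_{n-l_k}$, not $D_{n-l_k}$.

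Beyond this reversal, everything you outline goes through. The internal angle is indeed $\P\big(\xi_1/\sqrt{i_1}\ge\ldots\ge\xi_k/\sqrt{i_k}\ge 0\big)$; the paper symmetrizes over the full group $\{\pm1\}^k\times\mathcal{S}_k$, but your $\mathcal{S}_k$-only symmetrization is equally valid and produces the $2^{-k}$ you anticipate, since $\sum_{\pi}\P(Z_{\pi(1)}\ge\ldots\ge Z_{\pi(k)}\ge 0)=\P(Z_1\ge 0,\ldots,Z_k\ge 0)$ for $Z_j=\xi_j/\sqrt{i_j}$. For the final identification you want~\eqref{Eq_Equality_Generating_functions} rather than~\eqref{Eq_B(n,k)_expl}: after symmetrizing and summing over the size $r=n-l_k$ of the zero block you obtain $\sum_{r=0}^{n-k}2^{-k-2r}\binom{2r}{r}\stirling{n-r}{k}/(n-r)!$, which is $\stirlingb{n}{k}/(2^n n!)$ directly by~\eqref{Eq_Equality_Generating_functions}. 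Your worry about asymmetry in the symmetrization step is unfounded once you see that the zero block contributes only to the \emph{external} angle (via $B_{n-l_k}$) and not to the internal one; the $\xi_j$'s correspond only to the $k$ nonzero blocks and are genuinely exchangeable.
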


Similar to the $A_{n-1}$-case, we first need a formula for the solid angle of a cone whose points correspond to ``walks staying below zero''.

\begin{lem}\label{lemma:angle_Bm}
The solid angle of the cone
$$
B_{m}
:=\{x\in\R^m:x_1\le 0,x_1+x_2\le 0,\dots,x_1+\ldots+x_{m}\le 0\},
$$
is given by $\alpha(B_m)=\binom{2m}m \frac{1}{4^m}$, for $m\in\N$.
\end{lem}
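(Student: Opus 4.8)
The plan is to prove that $\alpha(B_m) = \binom{2m}{m} 4^{-m}$ by relating this solid angle to a classical random walk probability. The cone $B_m$ consists of all $x \in \R^m$ whose partial sums $S_j = x_1 + \ldots + x_j$ are all $\le 0$ for $j = 1, \ldots, m$; so if $g = (g_1, \ldots, g_m)$ is a standard Gaussian vector in $\R^m$, then $\alpha(B_m) = \P(g \in B_m)$ is exactly the probability that the Gaussian random walk $S_0 = 0, S_1, \ldots, S_m$ stays $\le 0$ at all times $1, \ldots, m$. By the classical Sparre Andersen fluctuation theory (which applies to any exchangeable sequence of increments whose partial sums avoid ties almost surely, and in particular to i.i.d.\ continuous increments such as Gaussians), this probability does not depend on the specific distribution and equals $\binom{2m}{m} 4^{-m}$. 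This is the analogue of the remark at the end of the proof of Lemma~\ref{lemma:angle_Dm}, where the bridge version gives $1/m$.

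Since the authors clearly prefer self-contained geometric arguments, I would instead give a direct proof via a recursion on $m$, paralleling the combinatorial identity $\binom{2m}{m}4^{-m} = \binom{2m-2}{m-1}4^{-(m-1)} \cdot \frac{2m-1}{2m}$, or more robustly via the reflection-type decomposition underlying the ballot problem. Concretely, let $p_m := \alpha(B_m)$. One clean approach: condition on the sign of the last increment $x_m$, or better, use the cycle lemma / combinatorial Sparre Andersen argument. For each cyclic permutation of the increment vector $(x_1, \ldots, x_m)$, consider how many of the $m$ cyclic shifts land in $B_m$; Sparre Andersen's lemma states that the number of cyclic shifts for which all partial sums are $\le 0$ equals the number of cyclic shifts for which the partial sums are, loosely speaking, ``weakly ladder-free'' — and averaging over the sphere and over cyclic shifts (each shift being a rotation, hence measure-preserving on $\lin B_m = \R^m$) produces the stated value. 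Alternatively, one can set up the two-variable generating function $\sum_{m \ge 0} p_m t^m$ and show it equals $(1-t)^{-1/2}$ by a first-passage decomposition of the walk at its first strictly positive ladder epoch, which yields $\left(\sum p_m t^m\right) \cdot \left(\sum q_m t^m\right) = (1-t)^{-1}$ where $q_m$ is the probability the walk is $> 0$ at all times; symmetry of the Gaussian gives $q_m = p_m$ up to boundary effects, forcing $\sum p_m t^m = (1-t)^{-1/2} = \sum_m \binom{2m}{m} 4^{-m} t^m$.

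In writing this up, I would present it in the same spirit as Lemma~\ref{lemma:angle_Dm}: describe a finite family of cones in $\R^m$, all isometric to $B_m$ (via permutation matrices) or to related cones, that tile $\R^m$ up to Lebesgue-null overlaps, then read off the angle. The catch here is that unlike the bridge case, the ``staying below zero'' cones for a free walk are not simply the $m$ cyclic rotations of a single cone — one needs the finer Sparre Andersen correspondence pairing the event $\{S_1 \le 0, \ldots, S_m \le 0\}$ with a set of configurations counted by $\binom{2m}{m}$ rather than by a single orbit. So the cleanest self-contained route is probably the generating-function identity: define $p_m = \alpha(B_m)$ and $r_m$ as the solid angle of the strict cone $\{x : x_1 < 0, \ldots, x_1 + \cdots + x_m < 0\}$ (which has the same angle as $B_m$, since the boundary is null), decompose the walk at the first time its partial sum is $\ge 0$ (or $> 0$), and obtain a convolution identity whose solution is the Catalan-type sequence $\binom{2m}{m}4^{-m}$.

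The main obstacle is pinning down the precise form of the Sparre Andersen / first-passage identity with the correct boundary conventions (strict vs.\ weak inequalities), since $\le 0$ versus $< 0$ matters for the recursion even though it does not matter for the final value (the relevant boundary hyperplanes are Lebesgue-null in $\R^m$). Once that bookkeeping is done, verifying that the resulting generating function is $(1-2t)$-free, i.e.\ equals $\sum_m \binom{2m}{m} t^m / 4^m = 1/\sqrt{1-t}$, is the standard Catalan generating function computation and poses no difficulty. I would also explicitly note, as the authors do for the bridge case, that this is the Gaussian instance of Sparre Andersen's theorem on the probability that a random walk stays below zero, citing the same source.
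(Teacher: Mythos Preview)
Your first paragraph is exactly the paper's proof: the authors simply observe that $\alpha(B_m)=\P(g\in B_m)$ for a standard Gaussian vector $g$ and then cite Sparre Andersen's formula for the probability that a random walk with i.i.d.\ symmetric continuous increments stays $\le 0$, obtaining $\binom{2m}{m}4^{-m}$ directly. That is the entire proof in the paper --- three lines and a citation.

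Your remaining discussion is based on the premise that ``the authors clearly prefer self-contained geometric arguments,'' but that premise is wrong here: unlike Lemma~\ref{lemma:angle_Dm}, the paper makes no attempt at a self-contained tiling or generating-function proof for $B_m$ and simply invokes the classical result. So everything after your first paragraph (the recursion, the cycle lemma, the generating-function sketch, the tiling attempt) is extra material not present in the paper. Those alternative routes are reasonable and would ultimately work, but they are neither needed nor what the authors do.
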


\begin{proof}[Proof of Lemma~\ref{lemma:angle_Bm}]
Take an $m$-dimensional standard Gaussian vector $N=(N_1,\dots,N_m)$. The solid angle of $B_m$ coincides with the probability that $N\in B_m$. Using the well-known formula of Sparre Andersen~\cite{Andersen1949} (which is valid in the special case when  $N_1,\dots,N_{m}$ are independent and standard Gaussian) we obtain
\begin{align*}
\alpha(B_m)=\P[N\in B_m]=\P[N_1\le 0,N_1+N_2\le 0,\dots,N_1+\ldots+N_m\le 0]=\binom{2m}m \frac{1}{2^{2m}},
\end{align*}
which completes the proof.
\end{proof}

\begin{proof}[Proof of Theorem~\ref{Theorem_Intr_Vol_Bn}]
To obtain the faces of $C^B$ we have to replace some of the inequalities defining $C^B$ by the corresponding equalities. It follows that for $1\le k \le n$ any $k$-face of $C^B$ is determined by a collection of indices $1\le l_1<\ldots<l_k\le n$ and given by
\begin{align*}
C^B(l_1,\dots,l_{k})=\{\beta\in\R^n: \beta_1=\ldots=\beta_{l_1}\ge\ldots\ge \beta_{l_{k-1}+1}=\ldots=\beta_{l_k}\ge\beta_{l_{k}+1}=\ldots=\beta_n=0\}.
\end{align*}
Then, \eqref{Eq_In_Vol_InternalExternal} yields the formula
\begin{align*}
\upsilon_k(C^B)
&	=\sum_{1\le l_1<\ldots<l_{k}\le n}\alpha\big(C^B(l_1,\dots,l_{k})\big)\alpha\big(N(C^B(l_1,\dots,l_{k}),C^B)\big).
\end{align*}

\vspace*{2mm}
\noindent
\textbf{External angles.}
The computation of the external angles for the $B_n$-case is similar to the $A_{n-1}$-case. Take some $k\in\{1,\dots,n\}$ and $1\le l_1<  \ldots<l_k\le n$. We start by computing the normal face $N(C^B(l_1,\dots,l_k),C^B)$ of $C^B(l_1,\dots,l_k)$. First of all, we have
\begin{align*}
(C^B)^\circ
&	=\pos\{e_1,e_1+e_2,\dots,e_1+\ldots+e_n\}^\circ\\
&	=\{x\in\R^n:x_1\le 0,x_1+x_2\le 0,\dots,x_1+\ldots+x_n\le 0\}.
\end{align*}
Since
\begin{align*}
(\lin C^B(l_1,\dots,l_k))^\perp
&	=\{x\in\R^n:x_1=\ldots=x_{l_1},\dots,x_{l_{k-1}+1}=\ldots=x_{l_k},x_{l_k+1}=\ldots=x_n=0\}^\perp\\
&	=\{x\in\R^n:x_1+\ldots+x_{l_1}=0,\dots,x_{l_{k-1}+1}+\ldots+x_{l_k}=0\},
\end{align*}
it follows that the normal cone $N(C^B(l_1,\dots,l_k),C^B)$ can be written
as the orthogonal product
\begin{align*}
N(C^B(l_1,\dots,l_k),C^B)=D_{l_1}\times D_{l_2-l_1}\times\ldots\times D_{l_k-l_{k-1}}\times B_{n-l_k},
\end{align*}
where 
\begin{align*}
D_m
&=\{x\in\R^m:x_1\le 0,x_1+x_2\le 0,\dots,x_1+\ldots+x_{m-1}\le 0,x_1+\ldots+x_m=0\},\\
B_{m}
&=\{x\in\R^m:x_1\le 0,x_1+x_2\le 0,\dots,x_1+\ldots+x_{m}\le 0\},\qquad B_0:=\{0\}.
\end{align*}
Using Lemmas~\ref{lemma:angle_Dm} and~\ref{lemma:angle_Bm} and the orthogonal product structure of the normal face
we arrive at
\begin{align}\label{Eq_External_Angle_Bn}
\alpha\big(N(C^B(l_1,\dots,l_k),C^B)\big)=\frac{\dbinom{2(n-l_k)}{n-l_k}}{l_1(l_2-l_1)\cdot\ldots\cdot(l_k-l_{k-1})2^{2(n-l_k)}}.
\end{align}
Note that the above formula is also valid for the case $k=0$ where $C^B(l_1,\dots,l_k)=\{0\}$.
%

\vspace*{2mm}
\noindent
\textbf{Internal angles.}
Now, we need to compute the internal angles for the faces of $C^B$, that is, the solid angles $\alpha(C^B(l_1,\dots,l_k))$ for $1\le l_1<\ldots<l_k\le n$. Recall that the linear hull of $C^B(l_1,\dots,l_k)$ is the $k$-dimensional linear subspace given by 
\begin{align*}
\lin C^B(l_1,\dots,l_k)=\{x\in\R^n:x_1=\ldots=x_{l_1},x_{l_1+1}=\ldots=x_{l_2},\dots,x_{l_k+1}=\ldots=x_n=0\}.
\end{align*}
The following vectors $y_1,\dots,y_k$ form an orthonormal basis of $\lin C^B(l_1,\dots,l_k)$:
\begin{multline*}
y_1=\frac{1}{\sqrt{l_1}}(\overbrace{1,\dots,1}^{1,\dots,l_1},0,\dots,0),y_2=\frac{1}{\sqrt{l_2-l_1}}(0,\dots,0,\overbrace{1,\dots,1}^{l_1+1,\dots,l_2},0,\dots,0),\dots,\\ y_k=\frac{1}{\sqrt{l_{k}-l_{k-1}}}(0,\dots,0,\overbrace{1,\dots,1}^{l_{k-1}+1,\dots,l_k},0,\dots,0).
\end{multline*}
For independent and standard Gaussian random variables $\xi_1,\dots,\xi_k$, the random vector $N:=\xi_1y_1+\dots+\xi_ky_k$ is $k$-dimensional standard Gaussian on the linear subspace $\lin C^B(l_1,\dots,l_k)$. Thus, we obtain for the angle of $C^B(l_1,\dots,l_k)$ the following formula:
\begin{align}\label{Eq_Internal_Angle_Bn}
\alpha(C^B(l_1,\dots,l_k))
	=\P(N\in C^B(l_1,\dots,l_k))=\P\Big(\frac{\xi_1}{\sqrt{l_1}}\ge \frac{\xi_2}{\sqrt{l_2-l_1}}\ge\ldots\ge \frac{\xi_k}{\sqrt{l_k-l_{k-1}}}\ge 0\Big).
\end{align}

\vspace*{2mm}
\noindent
\textbf{Conic intrinsic volumes.}
Let $1\le k\le n$. Using the formulas~\eqref{Eq_Internal_Angle_Bn} and~\eqref{Eq_External_Angle_Bn} for the internal and external angles, we have
\begin{align*}
\upsilon_k(C^B)
&	=\sum_{1\le l_1<\ldots<l_k\le n}\alpha(C^B(l_1,\dots,l_k))\alpha(N(C^B(l_1,\dots,l_k),C^B))\\
&	=\sum_{1\le l_1<\ldots<l_k\le n}\dbinom{2(n-l_k)}{n-l_k}\frac{\P\Big(\frac{\xi_1}{\sqrt{l_1}}\ge \frac{\xi_2}{\sqrt{l_2-l_1}}\ge\ldots\ge \frac{\xi_k}{\sqrt{l_k-l_{k-1}}}\ge 0\Big)}{l_1(l_2-l_1)\cdot\ldots\cdot(l_k-l_{k-1})2^{2(n-l_k)}}.
\end{align*}
Defining $i_1=l_1,i_2=l_2-l_1,\dots,i_k=l_k-l_{k-1}$, we can change the summation indices in the above sum and obtain
\begin{align}\label{Eq_Sum_Bn_1}
\upsilon_k(C^B)
&=\sum_{\substack{i_1,\dots,i_k\in\N\\i_1+\ldots+i_k\le n}}\dbinom{2(n-i_1-\ldots-i_k)}{n-i_1-\ldots-i_k}\frac{\P\Big(\frac{\xi_1}{\sqrt{i_1}}\ge \frac{\xi_2}{\sqrt{i_2}}\ge\ldots\ge \frac{\xi_k}{\sqrt{i_k}}\ge 0\Big)}{i_1i_2\cdot\ldots\cdot i_k 2^{2(n-i_1-\ldots-i_k)}}\notag\\
&=\sum_{r=0}^{n-k}\sum_{\substack{i_1,\dots,i_k\in\N\\i_1+\ldots+i_k= n-r}}\dbinom{2r}{r}\frac{\P\Big(\frac{\xi_1}{\sqrt{i_1}}\ge \frac{\xi_2}{\sqrt{i_2}}\ge\ldots\ge \frac{\xi_k}{\sqrt{i_k}}\ge 0\Big)}{i_1i_2\cdot\ldots\cdot i_k 2^{2r}}.
\end{align}
Now, fix a number $r\in\{0,1,\dots,n-k\}$, a permutation $\pi\in\mathcal{S}_k$ and a vector of signs $\eps=(\eps_1,\dots,\eps_k)\in\{\pm 1\}^k$. For each tuple $(i_1,\dots,i_k)\in\N^k$ satisfying $i_1+\ldots+i_k=n-r$, the tuple $(i_{\pi(1)},\dots,i_{\pi(k)})$ also satisfies $i_{\pi(1)}+\ldots +i_{\pi(k)}=n-r$. Thus, the inner sum in~\eqref{Eq_Sum_Bn_1} does not change if we replace the tuple $(i_1,\dots,i_k)$ by $(i_{\pi(1)},\dots,i_{\pi(k)})$ inside the sum. Furthermore, the sum does not change if we additionally replace $\xi_1,\dots,\xi_k$ by $\eps_1\xi_1,\dots,\eps_k\xi_k$, since $\xi_1,\dots,\xi_k$ are independent and  standard normal. Thus, we obtain
\begin{align*}
\upsilon_k(C^B)
&	=\sum_{r=0}^{n-k}\frac{1}{2^kk!}\sum_{(\eps,\pi)\in\{\pm 1\}^k\times \mathcal{S}_k}\:\sum_{\substack{i_1,\dots,i_k\in\N\\i_1+\ldots+i_k= n-r}}\dbinom{2r}{r}\frac{\P\Big(\frac{\eps_1\xi_1}{\sqrt{i_{\pi(1)}}}\ge \frac{\eps_2\xi_2}{\sqrt{i_{\pi(2)}}}\ge\ldots\ge \frac{\eps_k\xi_k}{\sqrt{i_{\pi(k)}}}\ge 0\Big)}{i_{\pi(1)}i_{\pi(2)}\cdot\ldots\cdot i_{\pi(k)} 2^{2r}}\\
&	=\frac{1}{2^kk!}\sum_{r=0}^{n-k}\sum_{\substack{i_1,\dots,i_k\in\N\\i_1+\ldots+i_k= n-r}}\frac{\binom{2r}{r}}{i_1i_2\cdot\ldots\cdot i_k2^{2r}}\sum_{(\eps,\pi)\in\{\pm 1\}^k\times \mathcal{S}_k}\P\Big(\frac{\eps_1\xi_1}{\sqrt{i_{\pi(1)}}}\ge\ldots\ge \frac{\eps_k\xi_k}{\sqrt{i_{\pi(k)}}}\ge 0\Big)\\
&	=\frac{1}{2^kk!}\sum_{r=0}^{n-k}\sum_{\substack{i_1,\dots,i_k\in\N\\i_1+\ldots+i_k= n-r}}\frac{\binom{2r}{r}}{i_1i_2\cdot\ldots\cdot i_k2^{2r}}\sum_{(\eps,\pi)\in\{\pm 1\}^k\times \mathcal{S}_k}\P\Big(\frac{\eps_1\xi_{\pi(1)}}{\sqrt{i_{\pi(1)}}}\ge\ldots\ge \frac{\eps_k\xi_{\pi(k)}}{\sqrt{i_{\pi(k)}}}\ge 0\Big)\\
&	=\frac{1}{2^kk!}\sum_{r=0}^{n-k}\sum_{\substack{i_1,\dots,i_k\in\N\\i_1+\ldots+i_k= n-r}}\frac{\binom{2r}{r}}{i_1i_2\cdot\ldots\cdot i_k2^{2r}}.
\end{align*}
Using the representation~\eqref{Eq_Stirling1_AsComp} for the Stirling numbers of the first kind and the formula~\eqref{Eq_Equality_Generating_functions}, we obtain
\begin{align*}
\upsilon_k(C^B)
=	\sum_{r=0}^{n-k}\frac{1}{2^kk!}\frac{\binom{2r}{r}}{2^{2r}}\stirling{n-r}{k}\frac{k!}{(n-r)!}
=	\sum_{r=0}^{n-k}2^{-k-2r}\binom{2r}{r}\stirling{n-r}{k}\frac{1}{(n-r)!}=\frac{\stirlingb nk}{2^nn!}.
\end{align*}
The case $k=0$ is easy since the only $0$-face is the origin $\{0\}$, and thus
\begin{align*}
\upsilon_0(C^B)=\alpha(\{0\})\alpha(N(\{0\},C^B))=\alpha((C^B)^\circ)=\alpha(B_n)
=\binom{2n}n\frac{1}{2^{2n}}=\frac{(2n-1)!!}{2^nn!}=\frac{\stirlingb n0}{2^nn!},
\end{align*}
where we applied the formula for $\alpha(B_n)$ stated in Lemma~\ref{lemma:angle_Bm}.
\end{proof}

\subsection{Type \texorpdfstring{$\boldsymbol{D_n}$}{D\_n}}

At last, we can also compute the conic intrinsic volumes of the Weyl chambers of type $D_n$. They follow from the intrinsic volumes of the type $B_n$ chambers and the additivity of the intrinsic volumes.

\begin{satz}\label{Theorem_Intr_Vol_Dn}
For $k=0,1,\dots,n$ the $k$-th conic intrinsic volume of the Weyl chamber of type $D_n$ is given by
\begin{align*}
\upsilon_k(\mathcal{C}(D_n))=\frac{\stirlingd {n}{k}}{2^{n-1}n!},
\end{align*}
where the numbers $\stirlingd {n}{k}$ are given by~\eqref{eq:D_n_k}.
\end{satz}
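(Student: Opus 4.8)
The plan is to exploit the relation $\stirlingd{n}{k}=\stirlingb{n}{k}-n\,\stirlingb{n-1}{k}$ from~\eqref{Eq_Relation_D(n,k)_B(n,k)} together with the geometric relationship between $C^D:=\mathcal{C}(D_n)$ and $C^B=\mathcal{C}(B_n)$. The key observation is that $\mathcal{C}(D_n) = \{\beta: \beta_1\ge\ldots\ge\beta_{n-1}\ge |\beta_n|\}$ is the union of two isometric copies of the type $B_n$ chamber: indeed $\mathcal{C}(B_n)=\{\beta_1\ge\ldots\ge\beta_n\ge 0\}\subseteq \mathcal{C}(D_n)$, and if we let $\sigma$ be the reflection $(\beta_1,\ldots,\beta_{n-1},\beta_n)\mapsto(\beta_1,\ldots,\beta_{n-1},-\beta_n)$, then $\mathcal{C}(D_n)=\mathcal{C}(B_n)\cup \sigma\mathcal{C}(B_n)$, with the two pieces meeting along the facet $F_0:=\{\beta\in\mathcal{C}(D_n):\beta_n=0\}=\{\beta_1\ge\ldots\ge\beta_{n-1}\ge 0=\beta_n\}$, which is (linearly) an $(n-1)$-dimensional copy of $\mathcal{C}(B_{n-1})$.

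First I would invoke the valuation (additivity) property of the conic intrinsic volumes: for polyhedral cones with $\upsilon_k(C_1\cup C_2)+\upsilon_k(C_1\cap C_2)=\upsilon_k(C_1)+\upsilon_k(C_2)$ when $C_1\cup C_2$ is again a cone. Applying this with $C_1=C^B$, $C_2=\sigma C^B$, so that $C_1\cup C_2 = C^D$ and $C_1\cap C_2 = F_0$, and using the rotation invariance of the intrinsic volumes (so $\upsilon_k(\sigma C^B)=\upsilon_k(C^B)$), gives
\begin{align}\label{Eq_Dn_valuation}
\upsilon_k(C^D) = 2\,\upsilon_k(C^B) - \upsilon_k(F_0).
\end{align}
Now $F_0$ is a facet of $C^D$ lying in the hyperplane $\{\beta_n=0\}\cong\R^{n-1}$, and inside that hyperplane it is exactly the Weyl chamber $\mathcal{C}(B_{n-1})$. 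However, the conic intrinsic volumes of $F_0$ as a cone in $\R^n$ (ambient dimension $n$) coincide with its intrinsic volumes as a cone in $\R^{n-1}$, since $\upsilon_k$ is intrinsic (it depends only on the cone, not on the ambient space — one may see this from $\upsilon_k(C)=\alpha(C)$ when $\dim C=k$ together with the internal/external angle formula, or directly from the definition). Hence $\upsilon_k(F_0)=\upsilon_k(\mathcal{C}(B_{n-1}))=\stirlingb{n-1}{k}/(2^{n-1}(n-1)!)$ by Theorem~\ref{Theorem_Intr_Vol_Bn}.

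Substituting into~\eqref{Eq_Dn_valuation} and using Theorem~\ref{Theorem_Intr_Vol_Bn} for $\upsilon_k(C^B)$, I get
\begin{align*}
\upsilon_k(C^D) = \frac{2\stirlingb{n}{k}}{2^n n!} - \frac{\stirlingb{n-1}{k}}{2^{n-1}(n-1)!} = \frac{\stirlingb{n}{k} - n\,\stirlingb{n-1}{k}}{2^{n-1} n!} = \frac{\stirlingd{n}{k}}{2^{n-1}n!},
\end{align*}
where the last equality is precisely the right-hand identity in~\eqref{Eq_Relation_D(n,k)_B(n,k)}. The main obstacle I anticipate is justifying the valuation/additivity step cleanly: one must check that $C^D$, $C^B\cap\sigma C^B$ are all polyhedral cones (clear), that the conic intrinsic volume extends to a valuation on the relevant class of cones (this is standard — it follows from the polynomiality/Euler-type relations for conic intrinsic volumes, cf.~\cite{Schneider2008, Amelunxen2017}), and that the inclusion-exclusion formula applies to this particular decomposition where the two pieces overlap exactly in a common facet. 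A careful alternative, if one wants to avoid citing the valuation property, is to redo the face-by-face computation as in the $B_n$ case: the faces of $C^D$ are of two kinds — those with $\beta_n\ne 0$ on their relative interior (coming in $\pm$ pairs, contributing $2\upsilon_k(C^B)$ minus the faces lying in $\{\beta_n=0\}$ counted twice) and the faces contained in $\{\beta_n=0\}$ (which form the faces of $F_0$) — but the external angles at the latter differ because in $C^D$ such a facet is not ``free'' on the $\beta_n$-side; tracking this correction reproduces~\eqref{Eq_Dn_valuation} directly. Either way the arithmetic collapses via~\eqref{Eq_Relation_D(n,k)_B(n,k)}.
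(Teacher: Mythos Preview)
Your proposal is correct and follows essentially the same approach as the paper: decompose $\mathcal{C}(D_n)$ as the union of $\mathcal{C}(B_n)$ and its reflection under $\beta_n\mapsto -\beta_n$, identify the intersection as a copy of $\mathcal{C}(B_{n-1})$, apply additivity of the conic intrinsic volumes (the paper cites \cite[Theorem~6.5.2]{Schneider2008}), and conclude via~\eqref{Eq_Relation_D(n,k)_B(n,k)}.
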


\begin{proof}
Consider the Weyl chamber
\begin{align*}
\mathcal{C}(D_n)
&   =\{\beta\in\R^n:\beta_1\ge\ldots\ge \beta_{n-1} \ge |\beta_n|\}\\
&	=\big(\mathcal{C}(D_n)\cap\{\beta_n\ge 0\}\big)\cup\big(\mathcal{C}(D_n)\cap\{\beta_n\le 0\}\big)\\
&	=\{\beta\in\R^n:\beta_1\ge \ldots\ge\beta_{n-1} \ge\beta_n\ge 0\}\cup\{\beta\in\R^n:\beta_1\ge \ldots\ge \beta_{n-1}\ge -\beta_n\ge 0\},
\end{align*}
where the first set on the right-hand side is the Weyl chamber of type $B_n$, while the second set is isometric to it.  Their intersection
\begin{align*}
&\{\beta\in\R^n:\beta_1\ge \ldots\ge\beta_{n-1} \ge\beta_n\ge 0\}\cap\{\beta\in\R^n:\beta_1\ge \ldots\ge \beta_{n-1}\ge -\beta_n\ge 0\}\\
&	\quad=\{\beta_1\ge\ldots\ge\beta_{n-1}\ge\beta_{n}=0\}
\end{align*}
is a Weyl chamber of type $B_{n-1}$ if we identify $\R^{n-1}$ and $\R^{n-1}\times \{0\}$. The conic intrinsic volumes $\upsilon_k$ are additive functionals, see \cite[Theorem 6.5.2]{Schneider2008},  and thus, we obtain
\begin{align*}
\upsilon_k(\mathcal{C}(D_n))
&	= 2\upsilon_k(\{\beta\in\R^n:\beta_1\ge \ldots\ge\beta_{n-1} \ge\beta_n\ge 0\})
-\upsilon_k(\{\beta_1\ge\ldots\ge\beta_{n-1}\ge\beta_{n}=0\})\\
&	= 2\frac{\stirlingb{n}{k}}{2^nn!}-\frac{\stirlingb{n-1}{k}}{2^{n-1}(n-1)!}	=\frac{\stirlingb{n}{k}-n\stirlingb{n-1}{k}}{2^{n-1}n!}
=\frac{\stirlingd {n}{k}}{2^{n-1}n!}.
\end{align*}
Here, we used the formula for the intrinsic volumes of the Weyl chambers of type $B_n$ from Theorem~\ref{Theorem_Intr_Vol_Bn}, and in the last step the relation~\eqref{Eq_Relation_D(n,k)_B(n,k)}.
\end{proof}

\section*{Acknowledgement}
Supported by the German Research Foundation under Germany's Excellence Strategy  EXC 2044 -- 390685587, \textit{Mathematics M\"unster: Dynamics - Geometry - Structure}  and by the DFG priority program SPP 2265 \textit{Random Geometric Systems}.

\vspace{1cm}

\bibliography{bibliography}

\begin{thebibliography}{10}

\bibitem{Abramson+Pitman:2011}
J.~Abramson and J.~Pitman.
\newblock Concave majorants of random walks and related {P}oisson processes.
\newblock {\em Combin. Probab. Comput.}, 20(5):651--682, 2011.

\bibitem{Abramson+Pitman+Ross+Bravo:2011}
J.~Abramson, J.~Pitman, N.~Ross, and G.~Uribe~Bravo.
\newblock Convex minorants of random walks and {L}\'{e}vy processes.
\newblock {\em Electron. Commun. Probab.}, 16:423--434, 2011.

\bibitem{amelunxen_buergisser}
D.~Amelunxen and P.~B\"{u}rgisser.
\newblock Intrinsic volumes of symmetric cones and applications in convex
  programming.
\newblock {\em Math. Program.}, 149(1-2, Ser. A):105--130, 2015.

\bibitem{amelunxen_buergisser_grass}
D.~Amelunxen and P.~B\"{u}rgisser.
\newblock Probabilistic analysis of the {G}rassmann condition number.
\newblock {\em Found. Comput. Math.}, 15(1):3--51, 2015.

\bibitem{Amelunxen2017}
D.~Amelunxen and M.~Lotz.
\newblock Intrinsic volumes of polyhedral cones: A combinatorial perspective.
\newblock {\em Discrete Comput. Geom.}, 58(2):371--409, jul 2017.

\bibitem{AmelunxenLotzDCG17}
D.~Amelunxen and M.~Lotz.
\newblock Intrinsic volumes of polyhedral cones: a combinatorial perspective.
\newblock {\em Discrete Comput. Geom.}, 58(2):371--409, 2017.

\bibitem{Amelunxen2014}
D.~Amelunxen, M.~Lotz, M.~B. McCoy, and J.~A. Tropp.
\newblock Living on the edge: phase transitions in convex programs with random
  data.
\newblock {\em Information and Inference}, 3(3):224--294, jun 2014.

\bibitem{bagno_biagioli_garber_some_identities}
E.~Bagno, R.~Biagioli, and D.~Garber.
\newblock Some identities involving second kind {S}tirling numbers of types {B}
  and {D}.
\newblock {\em Elect. J. Combin.}, 26(3):P3.9, 2019.

\bibitem{bagno_garber_balls}
E.~Bagno and D.~Garber.
\newblock Signed partitions - {A} balls into urns approach, 2019.
\newblock Preprint at arXiv: 1903.02877.

\bibitem{bala_stirling}
P.~Bala.
\newblock A {$3$}-parameter family of generalized {S}tirling numbers, 2015.
\newblock Preprint at https://oeis.org/A143395/a143395.pdf.

\bibitem{bona_handbook}
M.~B\'{o}na, editor.
\newblock {\em Handbook of enumerative combinatorics}.
\newblock Discrete Mathematics and its Applications (Boca Raton). CRC Press,
  Boca Raton, FL, 2015.

\bibitem{dowling}
T.~A. Dowling.
\newblock A class of geometric lattices based on finite groups.
\newblock {\em J. Combinatorial Theory Ser. B}, 14:61--86, 1973.

\bibitem{drton_klivans}
M.~{Drton} and C.~J. {Klivans}.
\newblock {A geometric interpretation of the characteristic polynomial of
  reflection arrangements.}
\newblock {\em {Proc. Amer. Math. Soc.}}, 138(8):2873--2887, 2010.

\bibitem{gao}
F.~Gao.
\newblock The mean of a maximum likelihood estimator associated with the
  {B}rownian bridge.
\newblock {\em Electron. Comm. Probab.}, 8:1--5, 2003.

\bibitem{Gao2001}
F.~Gao and R.~A. Vitale.
\newblock Intrinsic volumes of the {B}rownian motion body.
\newblock {\em Discrete Comput. Geom.}, 26(1):41--50, jan 2001.

\bibitem{henze_orakel}
N.~Henze.
\newblock Weitere {{\"U}}berraschungen im {Z}usammenhang mit dem
  {S}chnur-{O}rakel.
\newblock {\em Stochastik in der Schule}, 33(3):18--23, 2013.

\bibitem{HugSchneider2016}
D.~Hug and R.~Schneider.
\newblock Random conical tessellations.
\newblock {\em Discrete Comput. Geom.}, 56(2):395--426, may 2016.

\bibitem{humphreys_book}
J.~E. Humphreys.
\newblock {\em Reflection groups and {C}oxeter groups}, volume~29 of {\em
  Cambridge Studies in Advanced Mathematics}.
\newblock Cambridge University Press, Cambridge, 1990.

\bibitem{KVZ17}
Z.~Kabluchko, V.~Vysotsky, and D.~Zaporozhets.
\newblock Convex hulls of random walks: expected number of faces and face
  probabilities.
\newblock {\em Adv. Math.}, 320:595--629, 2017.

\bibitem{KVZ15}
Z.~Kabluchko, V.~Vysotsky, and D.~Zaporozhets.
\newblock Convex hulls of random walks, hyperplane arrangements, and {W}eyl
  chambers.
\newblock {\em Geom. Funct. Anal.}, 27(4):880--918, 2017.

\bibitem{klivans_swartz}
C.~J. {Klivans} and E.~{Swartz}.
\newblock {Projection volumes of hyperplane arrangements.}
\newblock {\em {Discrete Comput. Geom.}}, 46(3):417--426, 2011.

\bibitem{lang_stirling}
W.~Lang.
\newblock On sums of powers of arithmetic progressions, and generalized
  {S}tirling, {E}ulerian and {B}ernoulli numbers, 2017.
\newblock Preprint at arXiv: 1707.04451.

\bibitem{Pitman2006}
J.~Pitman.
\newblock {\em Combinatorial Stochastic Processes}.
\newblock Lecture Notes in Mathematics. Springer-Verlag, 2006.

\bibitem{schneider_combinatorial}
R.~Schneider.
\newblock Combinatorial identities for polyhedral cones.
\newblock {\em St. Petersburg Math. J.}, 29(1):209--221, 2018.

\bibitem{Schneider2008}
R.~Schneider and W.~Weil.
\newblock {\em Stochastic and Integral Geometry}.
\newblock Springer Berlin Heidelberg, 2008.

\bibitem{shephard_todd}
G.~C. Shephard and J.~A. Todd.
\newblock Finite unitary reflection groups.
\newblock {\em Canad. J. Math.}, 6:274--304, 1954.

\bibitem{sloane}
N.~J.~A. Sloane~(editor).
\newblock {The {O}n-{L}ine {E}ncyclopedia of {I}nteger {S}equences}.
\newblock https://oeis.org.

\bibitem{Andersen1949}
E.~{Sparre Andersen}.
\newblock On the number of positive sums of random variables.
\newblock {\em Scandinavian Actuarial Journal}, 1949(1):27--36, jan 1949.

\bibitem{sparre_andersen1}
E.~{Sparre Andersen}.
\newblock {On the fluctuations of sums of random variables.}
\newblock {\em {Math. Scand.}}, 1:263--285, 1953.

\bibitem{Sparre}
E.~{Sparre Andersen}.
\newblock On the fluctuations of sums of random variables {II}.
\newblock {\em Math. Scand.}, 2:195--223, 1954.

\bibitem{stanley_book}
R.~P. {Stanley}.
\newblock {An introduction to hyperplane arrangements.}
\newblock In E.~Miller, V.~Reiner, and B.~Sturmfels, editors, {\em {Geometric
  combinatorics}}, volume~13 of {\em IAS/Park City Mathematics Series}, pages
  389--496. AMS, 2007.

\bibitem{suter}
R.~Suter.
\newblock Two analogues of a classical sequence.
\newblock {\em J. Integer Seq.}, 3(1):Article 00.1.8, 2000.

\end{thebibliography}
\bibliographystyle{abbrv}



\end{document}